\theoremstyle{plain}
\newtheorem{thm}{Theorem}[section]
\newtheorem{prop}[thm]{Proposition}
\newtheorem{cor}[thm]{Corollary}
\theoremstyle{definition}
\newtheorem{df}[thm]{Definition}
\newtheorem{notation}[thm]{Notation}
\newtheorem{example}[thm]{Example}
\newtheorem{rem}[thm]{Remark}
\newtheorem{remarks}[thm]{Remarks}
\renewcommand{\bar}{\overline}
\DeclareMathOperator{\Z}{\mathbf{Z}}
\DeclareMathOperator{\bZ}{\mathbf{Z}}
\DeclareMathOperator{\cO}{\mathcal{O}}
\DeclareMathOperator{\Q}{\mathbf{Q}}
\DeclareMathOperator{\C}{\mathbf{C}}
\DeclareMathOperator{\F}{\mathbf{F}}
\DeclareMathOperator{\bF}{\mathbf{F}}
\DeclareMathOperator{\P1}{\mathbf{P}}
\DeclareMathOperator{\Gal}{\mathrm{Gal}}
\DeclareMathOperator{\Ind}{\mathrm{Ind}}
\DeclareMathOperator{\p}{\mathfrak{p}}
\DeclareMathOperator{\B}{\mathfrak{P}}
\DeclareMathOperator{\m}{\mathfrak{m}}
\DeclareMathOperator{\PSL}{\mathrm{PSL}}
\begin{document}

\date{\today\ (version 1.0)} 
\title[Arithmetic equivalence and the Goss zeta function]{Arithmetic equivalence for function fields, \\[1mm] the Goss zeta function and a generalization}
\author[G.~Cornelissen]{Gunther Cornelissen}
\address{Mathematisch Instituut, Universiteit Utrecht, Postbus 80.010, 3508 TA Utrecht, Nederland}
\email{g.cornelissen@uu.nl}
\author[A.~Kontogeorgis]{Aristides Kontogeorgis}
\address{Department of Mathematics, University of the Aegean, Karlovasi, Samos, Greece}
\email{kontogar@aegean.gr}
\author[L.~van der Zalm]{Lotte van der Zalm}
\address{Mathematisch Instituut, Universiteit Utrecht, Postbus 80.010, 3508 TA Utrecht, Nederland}
\email{l.zalm@planet.nl}

\subjclass[2000]{11G09, 11M38, 11R58, 14H05}

\begin{abstract} \noindent A theorem of Tate and Turner says that global function fields have the same zeta function if and only if the Jacobians of the corresponding curves are isogenous. In this note, we investigate what happens if we replace the usual (characteristic zero) zeta function by the positive characteristic zeta function introduced by Goss. We prove that for function fields whose characteristic exceeds their degree, equality of the  Goss zeta function is the same as Gassmann-equivalence (a purely group theoretical property), but this statement fails if the degree exceeds the characteristic. We introduce a `Teichm\"uller lift' of the Goss zeta function and show that equality of such is \emph{always} the same as Gassmann equivalence. \end{abstract}

\thanks{We thank Jakub Byszewski and Gautam Chinta  for useful discussions.}

\maketitle

\section{Introduction}

The following classical theorem about number fields shows a surprising equivalence between a number theoretical, a group theoretical, and an analytical statement:

\begin{thm} \label{thm1}
Let $K$ and $L$ denote two number fields. The following are equivalent:
\begin{enumerate}

\item[\textup{(i)}] \textup{[Arithmetic Equivalence]} For every prime number $p$ outside a set of Dirichlet density zero, the \emph{splitting type} of $p$ (i.e., the ordered sequence of inertia degrees with multplicities) in $K$ and $L$ is the same;
\item[\textup{(i')}] \textup{[Split Equivalence]} For every prime number $p$ outside a set of Dirichlet density zero, the number of prime ideals in $K$ and $L$ above $p$ are the same;
\item[\textup{(ii)}] \textup{[Ga{\ss}mann Equivalence]} Let $N$ denote a Galois extension of $\Q$ containing $K$ and $L$; then every conjugacy class in $\Gal(N/\Q)$ meets $\Gal(N/K)$ and $\Gal(N/L)$ in the same number of elements;  
\item[\textup{(iii)}] $K$ and $L$ have the same Dedekind zeta function: $\zeta_K(s)=\zeta_L(s)$.
\end{enumerate}
\end{thm}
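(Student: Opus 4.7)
The plan is to prove the cycle of implications (iii) $\Rightarrow$ (i) $\Rightarrow$ (i') $\Rightarrow$ (ii) $\Rightarrow$ (iii). First, (iii) $\Rightarrow$ (i) comes from uniqueness of Dirichlet series: writing
$$\zeta_K(s) = \prod_p \prod_{\p \mid p} (1 - p^{-f(\p \mid p) s})^{-1}$$
and comparing coefficients at each prime $p$ recovers the multiset of inertia degrees over $p$, i.e.\ the splitting type, away from the finitely many ramified primes. The step (i) $\Rightarrow$ (i') is tautological: the number of primes above $p$ is the cardinality of the splitting multiset.

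The heart of the proof is (i') $\Rightarrow$ (ii). Set $G = \Gal(N/\Q)$, $H_K = \Gal(N/K)$, $H_L = \Gal(N/L)$. For every unramified rational prime $p$ with Frobenius conjugacy class $[\sigma_p]$ in $G$, a classical orbit-counting argument identifies the number of primes of $K$ above $p$ with the number of orbits of $\langle \sigma_p \rangle$ acting on $G/H_K$, equivalently $\dim (\Ind_{H_K}^G \mathbf{1})^{\langle\sigma_p\rangle}$. By Chebotarev's density theorem, every conjugacy class of $G$ occurs as some Frobenius away from a density-zero set, so split equivalence upgrades to
$$\dim (\Ind_{H_K}^G \mathbf{1})^{\langle\sigma\rangle} = \dim (\Ind_{H_L}^G \mathbf{1})^{\langle\sigma\rangle} \quad \text{for all } \sigma \in G.$$
Since $\dim \rho^{\langle\sigma\rangle} = \frac{1}{|\sigma|}\sum_{i=0}^{|\sigma|-1}\chi_\rho(\sigma^i)$, a Möbius inversion over the divisor lattice of $|\sigma|$ recovers the character values, and so the characters of $\Ind_{H_K}^G \mathbf{1}$ and $\Ind_{H_L}^G \mathbf{1}$ coincide. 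The Frobenius reciprocity computation
$$\chi_{\Ind_{H}^G \mathbf{1}}(\sigma) = \frac{|Z_G(\sigma)|}{|H|}\,|C_\sigma \cap H|$$
then identifies equality of these characters with the Ga{\ss}mann condition (ii).

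Finally, (ii) $\Rightarrow$ (iii) follows since Ga{\ss}mann equivalence means $\Ind_{H_K}^G \mathbf{1}$ and $\Ind_{H_L}^G \mathbf{1}$ have the same character and hence are isomorphic as $\Q G$-modules; the Artin formalism gives the factorisation
$$\zeta_K(s) = \prod_\chi L(s, \chi, N/\Q)^{\langle \chi,\, \Ind_{H_K}^G \mathbf{1}\rangle},$$
the product running over irreducible characters $\chi$ of $G$, and equal multiplicities on both sides force $\zeta_K = \zeta_L$.

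The main obstacle is the step (i') $\Rightarrow$ (ii): it is the only implication that uses an analytic input (Chebotarev), and it requires the combinatorial bridge between orbit counts of cyclic subgroups on coset spaces and intersections of conjugacy classes with subgroups. The other three implications are essentially formal, relying on unique factorisation of Dirichlet series and the Artin--Brauer theory of $L$-functions, both of which apply equally well in the function-field setting and will need to be adapted carefully when the Goss zeta function replaces the classical one.
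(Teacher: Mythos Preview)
The paper does not itself prove Theorem~\ref{thm1}; it quotes it as a classical result and refers to Perlis, Stuart--Perlis, Klingen, and Ga{\ss}mann for proofs. What the paper does prove is the function-field analogue of (i)$\Leftrightarrow$(i')$\Leftrightarrow$(ii) in Proposition~\ref{blub} and the implication (ii)$\Rightarrow$(iii) in Proposition~\ref{esa}. Your cycle of implications tracks that outline closely: the step (i')$\Rightarrow$(ii) via Chebotarev plus a comparison of fixed-space dimensions of the two permutation modules, and (ii)$\Rightarrow$(iii) via the Artin factorisation, are precisely the arguments the paper sketches (citing Stuart--Perlis and Kani--Rosen respectively). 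Your extra implication (iii)$\Rightarrow$(i), read off from the Euler product, is the standard route and is implicit in the references the paper cites.

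The one genuine divergence is how you pass from ``equal $\langle\sigma\rangle$-fixed dimensions for all $\sigma$'' to ``equal characters''. The paper (in the proof of Proposition~\ref{blub}) simply invokes Serre's result that a rational representation is determined by these dimensions (Corollary to Theorem~30 in \cite{Serre2}). You instead propose a M\"obius inversion over the divisor lattice of $|\sigma|$. That is fine, but as written it has a small gap: the inversion only recovers the sum of $\chi$ over the \emph{generators} of $\langle\sigma\rangle$, not $\chi(\sigma)$ individually. You need the further remark that permutation characters, being $\Q$-valued, take the same value on all generators of a given cyclic subgroup, so that this sum equals $\phi(|\sigma|)\chi(\sigma)$. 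With that sentence added, your argument is a self-contained proof of the special case of Serre's theorem actually needed here, which is a mild gain in elementarity over the paper's black-box citation.
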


For proofs, see \cite{Perlis1}, \cite{Perlis2} for the equivalence of (i) and (i'), \cite{Klingen} or the historical source \cite{G}. The theorem can be used to show that the zeta function of a number field does not necessarily determine the number field up to field isomorphism. We also remark that the statement in (iii) is sometimes called arithmetic equivalence (e.g., \cite{dS}), but since here, we will study various zeta functions and how the equivalences in the above theorem depend on what zeta function is used, we do not use that convention, but rather the one from \cite{Klingen}. Of course, the theorem shows that for number fields, the choice of terminology does not matter. 

In this paper, we consider what happens to this theorem if number fields are replaced by function fields. The notational set-up is as follows. Let $F$ denote a purely transcendental field $\F_q(T)$ over a finite field $\F_q$ with $q$ elements, and $A=\F_q[T]$ the corresponding polynomial ring. Assume that $K$ and $L$ are two subfields of a fixed algebraic closure of $F$, and assume that $K$ and $L$ are two geometric extensions of $F$. Let $N$ denote a finite Galois extension of $F$ that contains both $K$ and $L$. 

By slight but continuous abuse of notation, we will not indicate the dependence of zeta functions on $F$ and $A$ (which is up to finitely many Euler factors). 

There is the following classical theorem of Tate (\cite{Tate}) and Turner (\cite{Turner}):
\begin{thm} The fields $K$ and $L$ have the same Weil zeta function precisely if the Jacobians of the corresponding curves are $F$-isogenous. 
\end{thm}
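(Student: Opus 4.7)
The plan is to unwind the definition of the Weil zeta function of each function field, identify the characteristic polynomial of Frobenius on the Jacobian as the essential invariant, and then invoke Tate's isogeny theorem for abelian varieties over finite fields.

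First I would pass from the fields $K, L$ to the smooth projective geometrically irreducible curves $C_K, C_L$ over $\F_q$ whose function fields they are (the geometricity hypothesis guarantees that the constant field of each is exactly $\F_q$). By the standard rationality of the Weil zeta function (Weil/Grothendieck trace formula applied to $\ell$-adic cohomology of a smooth projective curve), we have
\[
Z(C_K, t) = \frac{P_K(t)}{(1-t)(1-qt)}, \qquad P_K(t) = \det\bigl(1 - \mathrm{Frob}_q \cdot t \mid T_\ell J_K \otimes \Q_\ell\bigr),
\]
and similarly for $L$. Equality of the Weil zeta functions of $K$ and $L$ (up to the finitely many Euler factors being ignored) is therefore equivalent to the equality of the reciprocal characteristic polynomials $P_K = P_L$ of the Frobenius endomorphism acting on the rational Tate modules of the Jacobians $J_K$ and $J_L$.

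Next I would invoke Tate's theorem (\cite{Tate}) on homomorphisms between abelian varieties over finite fields: for any two abelian varieties $A, B$ over $\F_q$, the natural map
\[
\mathrm{Hom}_{\F_q}(A, B) \otimes \Z_\ell \longrightarrow \mathrm{Hom}_{\Z_\ell[\mathrm{Frob}_q]}\bigl(T_\ell A, T_\ell B\bigr)
\]
is an isomorphism, and a standard corollary is that $A$ and $B$ are $\F_q$-isogenous precisely when the characteristic polynomials of Frobenius on $T_\ell A$ and $T_\ell B$ coincide. Applying this with $A = J_K$ and $B = J_L$ turns the equality $P_K = P_L$ extracted above into an $F$-isogeny between the Jacobians, and conversely an isogeny forces the characteristic polynomials (hence the zeta functions) to agree.

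The only genuine content beyond bookkeeping is Tate's isogeny criterion; once that is in hand the rest is a direct translation. The one small subtlety to watch is that the excerpt allows the zeta functions to differ in finitely many Euler factors (corresponding to primes of $F$ ramified in $K$ or $L$), but since two polynomials $P_K, P_L$ with finitely many matching Euler factors removed still determine each other up to those same factors, and Frobenius eigenvalues of weight $1/2$ cannot conspire to compensate factors coming from places of $F$ of degree one, this causes no harm in the equivalence.
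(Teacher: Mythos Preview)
The paper does not actually prove this theorem: it is stated as a classical result and attributed to Tate (\cite{Tate}) and Turner (\cite{Turner}) without argument. So there is no ``paper's own proof'' to compare against.

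Your sketch is the standard route and is correct in substance: identify the numerator of the Weil zeta function with the reverse characteristic polynomial of Frobenius on $T_\ell$ of the Jacobian, and then invoke Tate's theorem that two abelian varieties over a finite field are isogenous precisely when their Frobenius characteristic polynomials coincide. That is exactly how one proves the Tate--Turner statement.

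The only part I would tighten is your final paragraph. The hand-waving about ``Frobenius eigenvalues of weight $1/2$ cannot conspire to compensate factors coming from places of $F$ of degree one'' is not a real argument, and as written it does not settle anything. The clean way out is simply to read the theorem (as Tate and Turner do) for the \emph{full} zeta function of the projective curve, i.e.\ including the places above $\infty$; then the numerator $P_K(t)$ is literally the Frobenius polynomial on the Jacobian and no Euler-factor bookkeeping is needed. If you insist on the paper's $A$-relative convention, you should instead note that equality of the $A$-relative zeta functions for all $s$ forces equality of the full zeta functions once one accounts separately for the (finitely many) places above $\infty$, which are visible from the splitting of $\infty$ in $K$ and $L$; but that is a separate small lemma, not the vague weight argument you gave.
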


Here, by the Weil zeta function of $K$ we mean $\zeta_K(s) = \sum_{I} {\mathbf N}(I)^{-s}, $ where we sum over non-zero ideals of the integral closure $\cO$ of $A$ in $K$, and ${\mathbf N}(I)=q^{\deg(I)}$.  Note that one may also sum over places; the corresponding zeta function is then the one above multiplied by the Euler factor `at infinity' $\prod_{i=1}^r (1-N(\infty_i)^{-s})^{-1}$, if $\infty_i$ are the places above $\infty=T^{-1}$. 

Mimicking the terminology above, we call $K$ and $L$ arithmetically, linearly or Ga{\ss}mann equivalent if the obvious function fields version of the corresponding statements in Theorem \ref{thm1} holds. We will use a general representation theoretical argument to show that Ga{\ss}mann equivalence implies equality of Weil zeta functions (Prop.\ \ref{esa}). We will show an example of function fields with the same Weil zeta function, but that are not Ga{\ss}mann equivalent (Ex.\ \ref{malakie}).

Now David Goss \cite{Goss} (following preliminary work of Carlitz) has introduced a new characteristic $p$ valued zeta function for function fields, essentially interpolating the function $$\zeta^{[p]}_K(s) = \sum_I \frac{1}{N(I)^s}$$ from positive integers $s$ to a function field analogue of the complex plane, which anyhow (only this is important for us) includes all integer values of $s$. Here, the sum ranges over nonzero ideals of the ring of integers $\cO_K$ (integral closure of $A$) of $K$, and the norm is from $\cO_K$ \emph{to $A$, so characteristic $p$ valued}, in contrast to the Weil zeta function. Again, one may incorporate infinite places.

The aim of this note is to investigate whether equality of Goss zeta functions is the same as Ga{\ss}mann-equivalence. We will show that Ga{\ss}mann, split, and arithmetical equivalence are the same for function fields (Prop.\ \ref{blub}). Concerning the relation to zeta functions, we prove

\begin{thm}  Let $K$ and $L$ denote two finite geometric field extensions of $F=\F_q(T)$. \begin{enumerate}
\item[\textup{(i)}] If $p>[K:F]$ and $p>[L:F]$, then $\zeta^{[p]}_K(s)=\zeta^{[p]}_L(s)$ means the same as arithmetical, split and Ga{\ss}mann-equivalence of $K$ and $L$ \textup{(cf.\ Thm.\ \ref{fuit})}.
\item[\textup{(ii)}] There exist infinitely many field extension $K$ of $F$ that have the same Goss zeta function, but are not arithmetically equivalent. Actually, as soon as $K$ is a Galois $p$-extension of $F$ only ramified above $\infty$, its Goss zeta function is $\zeta^{[p]}_K(s)=\zeta^{[p]}_F([K:F]s)$. In particular, the Goss zeta function does not determine the Weil zeta function of a function field \textup{(cf.\ Prop. \ref{funny} and Cor.\ \ref{for})}. 
\end{enumerate}
\end{thm}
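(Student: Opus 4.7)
The direction ``Ga{\ss}mann equivalence $\Rightarrow$ equality of Goss zeta functions'' is immediate: by Proposition \ref{blub}, Ga{\ss}mann equivalence coincides with arithmetical equivalence, so at almost every prime $\pi$ of $A$ the splitting types of $\pi$ in $K$ and $L$ agree, and hence so do the Euler factors $\prod_i (1-\pi^{-f_i s})^{-1}$. For the converse, uniqueness of the Euler product (justified by the multiplicativity in $a$ of the mod-$p$ ideal-count $c(a) = \#\{\text{ideals } I \subseteq \cO_K : N(I) = a\}$) reduces the global identity $\zeta^{[p]}_K = \zeta^{[p]}_L$ to local identities
\[ \prod_i (1 - u^{f_i}) = \prod_j (1 - u^{g_j}) \text{ in } \F_p[u], \quad u := \pi^{-s}, \]
where $(f_i)$, $(g_j)$ are the splitting types of $\pi$ in $K$, $L$. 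The combinatorial heart of the proof, and the place where the hypothesis $p > [K:F], [L:F]$ is genuinely used, is the claim that this identity forces $\{f_i\} = \{g_j\}$: the hypothesis makes each $f_i, g_j$ coprime to $p$, so $1 - u^f = \prod_{d \mid f}\Phi_d(u)$ factors in $\F_p[u]$ into pairwise coprime cyclotomic polynomials (their roots in $\bar\F_p$ are disjoint sets of primitive roots of unity). Reading off the multiplicity of each $\Phi_d$ on both sides gives $\#\{i : d \mid f_i\} = \#\{j : d \mid g_j\}$ for every $d$, which determines the multisets uniquely. Proposition \ref{blub} then upgrades this split equivalence to Ga{\ss}mann equivalence.

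\noindent
\textbf{Plan for part (ii).} Let $K/F$ be any Galois $p$-extension of degree $p^a$ ramified only above $\infty$. At each prime $\pi \ne \infty$ the extension is unramified and the decomposition groups are cyclic subgroups of the $p$-group $\Gal(K/F)$, so every residue degree $f_i$ is a power of $p$ and $\sum_i f_i = [K:F]$. The characteristic-$p$ identity $1 - u^{p^b} = (1-u)^{p^b}$ collapses the local Euler factor:
\[ \prod_{\p \mid \pi}(1 - \pi^{-f_i s}) = (1 - \pi^{-s})^{[K:F]} = 1 - \pi^{-[K:F]s}, \]
so the Euler factor of $\zeta^{[p]}_K$ at $\pi$ equals that of $\zeta^{[p]}_F$ at $\pi$ evaluated at $[K:F]s$. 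Multiplying over $\pi \ne \infty$ yields $\zeta^{[p]}_K(s) = \zeta^{[p]}_F([K:F]s)$. For an infinite family, take Artin--Schreier extensions $y^p - y = f(T)$ with $f \in A$ running over nontrivial classes in $A/\wp(A)$: each is a Galois degree-$p$ extension of $F$ ramified only above $\infty$, and the splitting of a prime $\pi$ is governed by whether $f \bmod \pi$ lies in the image of $\wp$ on the residue field. Generically distinct $f$'s yield distinct splitting behaviour, hence non-arithmetically-equivalent extensions with different Weil zeta functions, while all share the common Goss zeta function $\zeta^{[p]}_F(ps)$. This also indicates why the bound $p > [K:F]$ in (i) is sharp: the collapse $(1 - u)^p = 1 - u^p$ exploited here is exactly what the hypothesis of (i) excludes.
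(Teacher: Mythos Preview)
Your arguments for both the equivalence in (i) and the identity $\zeta^{[p]}_K(s)=\zeta^{[p]}_F([K:F]s)$ in (ii) are correct, but they proceed differently from the paper. For (i), the paper reads off from $\zeta^{[p]}_K$ the Dirichlet coefficients $B(\mathfrak p^f)\bmod p$ and then recursively recovers the integers $C(\mathfrak p^f)=\#\{\mathfrak P\mid\mathfrak p: f(\mathfrak P/\mathfrak p)=f\}$ via the combinatorial identity
\[
B(\mathfrak p^f)=C(\mathfrak p^f)+[\text{function of }C(\mathfrak p^{f'}),\ f'<f],
\]
using the hypothesis in the form $C(\mathfrak p^f)<p$. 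You instead pass to Euler factors and exploit the hypothesis as $f_i<p$, so that the $\Phi_d$ with $d\mid f_i$ are pairwise coprime in $\mathbf F_p[u]$; this is a cleaner route and makes the role of the bound more transparent. For the formula in (ii), the paper counts ideals of norm $\mathfrak p^f$ by letting $G$ act and observing that the only fixed ideals are powers of $\mathfrak p\mathcal O_K$; your Euler-factor collapse $(1-u^{f_i})=(1-u)^{f_i}$ is the multiplicative counterpart of that orbit argument.

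Two points need tightening. First, in the forward direction of (i) you conclude equality of Goss zeta functions from equality of Euler factors \emph{at almost all} $\pi$; but the product is over all $\pi\neq\infty$, so you must also match the finitely many exceptional (e.g.\ ramified) factors. The cleanest fix is to invoke Proposition~\ref{esa}: Ga{\ss}mann equivalence gives $\zeta_K=\zeta_L$ as Weil zeta functions, hence the multisets $\{f_i\}$ agree at \emph{every} $\pi$, and so do the Goss Euler factors. Second, and more substantively, your last step in (ii) asserts that non-arithmetically-equivalent extensions have different Weil zeta functions; this implication is false in general (Example~\ref{malakie} is exactly a counterexample). To show that the Goss zeta function does not determine the Weil zeta function you must instead produce, among your Artin--Schreier covers, two with distinct genera: for $y^p-y=T^m$ with $\gcd(m,p)=1$ the genus is $\tfrac12(p-1)(m-1)$, so varying $m$ does the job, as in Corollary~\ref{for}.
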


The `problem' with the Goss zeta function is that it takes values in positive characteristic, so it only determines the splitting type `mod $p$'. We can fix this, however, by introducing a `Teichm\"uller lift' of the Goss zeta function, which we call the \emph{lifted Goss zeta function}.

\begin{df} The \emph{lifted Goss zeta function} of $K$ (w.r.t.\ $F$ and $A$) is 
$$\zeta_K^{[0]}(s) = \sum_{I} \frac{1}{\chi\left(N(I)^s\right)}, $$
where $\chi \, : \, F_\infty \, \rightarrow W$ is the Teichm\"uller lift from $F_\infty=\F_q((T^{-1}))$ to its ring of Witt vectors. 
\end{df}

This new zeta function has some obvious properties that it shares with the Goss zeta function, such as permiting an Euler product, but also some nice characteristic zero properties, such as subsiding to the Artin formalism of factorisation according to characters (cf.\ Section \ref{LG}).

We prove: 
\begin{thm} Let $K$ and $L$ denote two finite geometric field extensions of $F=\F_q(T)$.
\begin{enumerate}
\item[\textup{(i)}] 
 The equality $\zeta^{[0]}_K(s)=\zeta^{[0]}_L(s)$ means the same as arithmetical, split and Ga{\ss}mann-equivalence, irrespective of any bounds on the degrees of $K$ and $L$ over $F$ \textup{(cf.\ Thm.\ \ref{brr}).}
\item[\textup{(ii)}] The lifted Goss zeta function of a function field $K$ determines its Weil zeta function and its Goss zeta function \textup{(cf.\ Cor.\ \ref{crr}).} 
\item[\textup{(iii)}] For every $q$, there exist two function field extensions of $F=\F_q(T)$ such that $K$ and $L$ have the same lifted Goss zeta function, but are not isomorphic; the degree of $K/F$ and $L/F$ has to exceed $6$ for this to be possible. \textup{(cf.\ Prop.\ref{drr}).} 
\end{enumerate}
\end{thm}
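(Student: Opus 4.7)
Plan. My starting point is that $\chi$ is multiplicative and injective on $A$, so with
\[
c_K(a):=\#\{I\subseteq\cO_K: N(I)=a\}\qquad (a\in A \text{ monic}),
\]
the lifted Goss zeta function rewrites (using $\chi(N(I)^s)=\chi(N(I))^s$ for integer $s$) as
\[
\zeta_K^{[0]}(s) \;=\; \sum_{a\in A\text{ monic}} c_K(a)\,\chi(a)^{-s} \;=\; \prod_{\B\subset\cO_K}\bigl(1-\chi(N(\B))^{-s}\bigr)^{-1}.
\]
The crucial step I intend to prove is that the whole family $\{c_K(a)\}_a$ is an invariant of $\zeta_K^{[0]}$: at each prime $\p\subset A$ the local factor $\prod_{\B|\p}(1-\chi(\p)^{-f(\B/\p)s})^{-1}$ is isolated because distinct primes of $A$ have distinct Teichm\"uller representatives in $W(F_\infty)$ (so their contributions do not interfere), and then unique factorisation in $\Z[X,X^{-1}]$ with $X=\chi(\p)^{-s}$ determines the multiset of inertia degrees $\{f(\B/\p)\}_{\B|\p}$.

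This gives (i) and (ii) together. For (i), $\zeta^{[0]}_K=\zeta^{[0]}_L$ forces equality of the splitting multisets above every unramified $\p$, which is arithmetic equivalence of $K$ and $L$, and by Prop.\ \ref{blub} this equals Ga{\ss}mann equivalence. The converse implication is immediate from (the proof of) Prop.\ \ref{esa}: Ga{\ss}mann equivalence equates the multisets of ideals of $\cO_K$ and $\cO_L$ indexed by their $A$-norm, which formally forces equality of $\zeta^{[0]}$ (\emph{a fortiori}, not only of the Weil zeta function). For (ii), reducing $\chi(a)\bmod p$ returns $a$, whence $\zeta^{[p]}_K(s)=\sum_a c_K(a) a^{-s}$, and regrouping by degree gives $\zeta^{\mathrm{Weil}}_K(s)=\sum_n\bigl(\sum_{\deg a=n}c_K(a)\bigr)q^{-ns}$; both are manifestly determined by $\{c_K(a)\}$ and hence by $\zeta_K^{[0]}$.

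For (iii), by (i) and Prop.\ \ref{blub} the task of producing non-isomorphic $K,L$ with $\zeta^{[0]}_K=\zeta^{[0]}_L$ is equivalent to exhibiting a finite group $G=\Gal(N/F)$ over $F=\F_q(T)$ together with a non-conjugate Ga{\ss}mann pair of subgroups. The classical choice $G=\PSL_2(\F_7)\cong GL_3(\F_2)$, with $H_1,H_2$ the stabilisers of a point, respectively of a hyperplane, of $\mathbf{P}^2(\F_2)$ (both of index $7$), works; and $\PSL_2(\F_7)$ is realisable as a Galois group over $\F_q(T)$ for every $q$ by standard inverse Galois theory over global function fields, so the fixed fields $N^{H_1}, N^{H_2}$ yield the claim in degree $7$. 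For the lower bound, the purely group-theoretic half of Perlis's theorem \cite{Perlis1} shows that any two Ga{\ss}mann-equivalent subgroups of index at most $6$ in a finite group are already conjugate, so $[K:F]\leq 6$ together with $\zeta^{[0]}_K=\zeta^{[0]}_L$ forces $K\cong L$.

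The main obstacle I anticipate is the ``local factor uniqueness'' step in the first paragraph: the ring $W(F_\infty)$ is somewhat delicate (since $F_\infty$ is imperfect and $\chi$ is only a set-theoretic multiplicative section), so one must clarify in what sense ``$\zeta^{[0]}_K=\zeta^{[0]}_L$ as a function of $s$'' is being used. The cleanest fallback is to test the identity at all sufficiently positive integers $s$ and exploit the injectivity of $\chi$ on $A$ together with the standard linear independence of distinct multiplicative characters in characteristic zero to separate the contributions coming from different primes.
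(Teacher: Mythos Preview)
For parts (i) and (ii) your argument is essentially the paper's. The paper writes $\zeta_K^{[0]}(s)=\sum_n B(n)\chi(n)^{-s}$ and asserts that this determines the \emph{integers} $B(n)$ (your $c_K(a)$), then invokes the combinatorial identity (\ref{g2}) to recover all splitting types, exactly as you do; the converse direction and the equivalence with split/Ga{\ss}mann equivalence are handled via Prop.~\ref{blub}, again as you propose. Your discussion of how to separate the local factors is more explicit than the paper's, which leaves that step implicit; your fallback via linear independence of characters is adequate and in the spirit of the cited uniqueness result of Goss for function-field Dirichlet series. For (ii) you argue slightly more directly than the paper (which routes the Weil zeta through Ga{\ss}mann equivalence and Prop.~\ref{esa}, and the Goss zeta through reduction $\bmod\ \m$), but your direct recovery of both zetas from the coefficient data $\{c_K(a)\}$ is correct and arguably cleaner.

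Part (iii) has a genuine gap. You claim that $\PSL_2(\F_7)$ is realizable as the Galois group of a geometric extension of $\F_q(T)$ for \emph{every} $q$ ``by standard inverse Galois theory over global function fields'', and use this single construction to cover all $q$. The paper does \emph{not} make this claim: its $\PSL_2(\F_7)$ realization (via the Klein quartic $X(7)$ for $p\neq 7$, and Abhyankar's octic $X^8-TX+1$ for $p=7$) requires $\F_q\supseteq\F_p(\zeta_3)$, i.e.\ $q$ a square when $p\equiv 2\pmod 3$, and the authors explicitly say they do not know whether this descends to $\F_p$ in general. To get examples over \emph{every} $\F_q(T)$, the paper instead uses Komatsu's pair $H_1=(\Z/\ell)^3$ and $H_2=$ unipotent upper-triangulars in $\mathrm{GL}_3(\F_\ell)$, embedded in $S_{\ell^3}$ via Cayley, and realizes $S_{\ell^3}$ over $\F_q(T)$ by Hilbert irreducibility (unproblematic for symmetric groups and valid since $\F_q(T)$ is Hilbertian). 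This yields Ga{\ss}mann-equivalent, non-isomorphic subgroups and hence the desired $K,L$, albeit of enormous degree $(\ell^3-1)!$. Your Perlis lower bound in degree $\leq 6$ is correct and is exactly what the paper uses. So either supply a reference that genuinely gives a regular $\PSL_2(\F_7)$-realization over every $\F_q(T)$, or replace your construction by the Komatsu/Hilbert-irreducibility argument for the ``every $q$'' claim and keep $\PSL_2(\F_7)$ only for sharpness of the bound under the stated restriction on $q$.
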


\begin{notation} For the convenience of the reader, we gather our notational convention for the three different zeta functions of a function field $K$:
\begin{enumerate}
\item $\zeta_K$ is the Weil zeta function of $K$ --- which takes values in the complex numbers $\C$;
\item $\zeta^{[p]}_K$ is the Goss zeta function of $K$ --- the superscript indicates that this function takes values in characteristic $p$;
\item $\zeta^{[0]}_K$ is the lifted Goss zeta function of $K$ --- the superscript indicates that this function takes values in characteristic $0$; actually, in the ring of Witt vectors of $\F_q((T^{-1}))$.
\end{enumerate}

\end{notation}

\section{Ga{\ss}mann, linear and arithmetical equivalence for function fields}

Throughout, let $F$ denote a purely transcendental field $\F_q(T)$ over a finite field $\F_q$ with $q$ elements. Assume $K$ and $L$ are two subfields of a fixed algebraic closure of $F$, and assume that $K$ and $L$ are two geometric extensions of $F$. Let $N$ denote a finite Galois extension of $F$ that contains both $K$ and $L$. Let $G$ denote the Galois group of $N$ over $F$, and for any subfield $M$ of $N$,  let $H_M$ denote the Galois group of $N$ over  $M$. We have a diagram of function fields, and a corresponding diagram of curves:
$$\xymatrix{ 
 & N  \ar@{-}[dl]_{H_K} \ar@{-}[dr]^{H_L} \ar@{-}[dd]_G &\\
K  \ar@{-}[dr]          & &     L \ar@{-}[dl] \\
 &                 F= \bF_q[T]                 &  }
\, \, \, \, \, \, \, 
\xymatrix{ 
 & Z  \ar@{-}[dl]_{H_K} \ar@{-}[dr]^{H_L} \ar@{-}[dd]_G &\\
X  \ar@{-}[dr]          & &     Y \ar@{-}[dl] \\
 &                 \P1^1(\F_q)                &  }
$$

We start with proving that the equivalence of the first three conditions in Theorem \ref{thm1} is preserved for global function fields. This is essentially because the proofs only depend on group theory (in particular, rational representation theory of a permutation representation), and general facts from algebraic number theory that persists in global fields. We nevertheless outline the proofs with references: 

\begin{prop} \label{blub}
The following are equivalent:
\begin{enumerate}
\item[\textup{(i)}] \textup{[Arithmetic Equivalence]} For every irreducible polynomial $\p$ in $F$ outside a set of Dirichlet density zero, the \emph{splitting type} of $\p$ (i.e., the ordered sequence of inertia degrees with multplicities) in $K$ and $L$ is the same;
\item[\textup{(i')}] \textup{[Split Equivalence]} For every irreducible polynomial $\p$ in $F$ outside a set of Dirichlet density zero, the number of prime ideals in $K$ and $L$ above $\p$ are the same;
\item[\textup{(ii)}] \textup{[Ga{\ss}mann Equivalence]} Every conjugacy class in $G$ meets $H_K$ and $H_L$ in the same number of elements.  
\end{enumerate}
\end{prop}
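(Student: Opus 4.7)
The plan is to transplant the classical proof due to Perlis \cite{Perlis1,Perlis2} and Klingen \cite{Klingen} from the number-field setting to the function-field setting, verifying that each ingredient survives. The dictionary between splitting data and group theory is the usual one: for an unramified prime $\p$ of $F$, a Frobenius element $\sigma_\p \in G$ is defined up to conjugacy, the primes of $K$ above $\p$ correspond bijectively to the orbits of $\langle \sigma_\p \rangle$ on the coset space $G/H_K$, and the inertia degree of each such prime equals the length of the corresponding orbit. The only function-field arithmetic fact needed beyond this is the Chebotarev density theorem for global function fields: every conjugacy class of $G$ is realised as a Frobenius class with positive Dirichlet density. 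Since only finitely many primes of $F$ ramify in $N$, any density-zero set of primes can be enlarged to contain the ramified ones without discarding any conjugacy class.

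For (ii) $\Leftrightarrow$ (i), I would first record the classical identity
$$ \chi_{G/H}(g) = |g^G \cap H| \cdot \frac{|C_G(g)|}{|H|}, $$
where $\chi_{G/H}$ is the permutation character of $G$ acting on $G/H$. Ga{\ss}mann equivalence of $H_K$ and $H_L$ in $G$ is thus equivalent to the equality of permutation characters $\chi_{G/H_K} = \chi_{G/H_L}$, or equivalently, to an isomorphism of $\mathbf{Q}[G]$-modules $\mathbf{Q}[G/H_K] \cong \mathbf{Q}[G/H_L]$. Applied to every power $g^n$ and combined with the standard formula recovering cycle lengths from fixed-point counts of powers of a permutation, this is the same as saying that each element of $G$ has the same cycle type on $G/H_K$ as on $G/H_L$. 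Via the dictionary above and Chebotarev, this is exactly arithmetic equivalence.

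The implication (i) $\Rightarrow$ (i') is trivial, since the number of prime ideals above $\p$ is the number of orbits. For the converse (i') $\Rightarrow$ (ii) I would follow Perlis's M\"obius-inversion argument: by Chebotarev and Burnside's lemma, split equivalence is equivalent to the identity
$$ \sum_{d=0}^{|\langle g\rangle|-1} |\mathrm{Fix}_{G/H_K}(g^d)| = \sum_{d=0}^{|\langle g\rangle|-1} |\mathrm{Fix}_{G/H_L}(g^d)| $$
for every $g \in G$. Running over all $g$ and applying M\"obius inversion on the poset of cyclic subgroups of $G$, one recovers $|\mathrm{Fix}(g)| = \chi_{G/H}(g)$ for every $g \in G$, which is the Ga{\ss}mann condition by the identity above. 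I expect the main obstacle to be not conceptual but rather bookkeeping: one must use that $K$ and $L$ are geometric extensions (so that constant-field extensions do not enlarge the ground field unexpectedly and $G$ retains the desired interpretation), and verify that the function-field Chebotarev theorem is strong enough to deliver positivity of the Dirichlet density for every conjugacy class. Both points are standard.
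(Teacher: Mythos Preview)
Your proposal is correct and follows essentially the same strategy as the paper: translate splitting data into orbit/double-coset data via Chebotarev for function fields, then reduce to a purely group-theoretic comparison of the permutation modules $\mathbf{Q}[G/H_K]$ and $\mathbf{Q}[G/H_L]$. The only differences are cosmetic: for (i')$\Rightarrow$(ii) the paper invokes Serre's result that a rational representation is determined by the dimensions of its $C$-invariants for all cyclic $C$, whereas you unpack this as an explicit Burnside--M\"obius inversion; and for (ii)$\Rightarrow$(i) the paper phrases things via Ga{\ss}mann's ``coset types'' rather than cycle types of the permutation action, but these are equivalent formulations.
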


\begin{proof}
We note that (i) implies (i'), since the latter is weaker (only talking about the length of the splitting type). 

The implication (i') $\Rightarrow$ (ii) is proven by Perlis and Stuart in \cite{Perlis2} for number fields, and we outline the (similar) argument for function fields. The point is that the proof uses only general features of algebraic number theory that hold over global fields, and the theory of rational representations.  Let $S$ denote the set of Dirichlet density zero from the statement, enlarged by the finitely many ramified primes in $N/F$. For any $g \in G$, let $C$ denote the cyclic subgroup generated by $g$. By function field Chebotarev (e.g., \cite{Jarden} Thm.\ 6.3.1), we can find an irreducible polynomial $\p$ outside $S$ having decomposition group $C$ in $N$.   The splitting number of $\p$ in $K$ is the number of double cosets $|C\backslash G / H_K|$ and similarly for $L$ (the number field statement is due to Hasse, it is reproven as Lemma 1 in \cite{Perlis2}, and this proof works \emph{mutatis mutandis} for function fields). Let $\rho_K=\Ind_{H_K}^N 1$ and $\rho_L=\Ind_{H_L}^N 1$. Now $|C \backslash G / H_K|$ is the dimension of the $\rho_K(C)$-invariants of the $\mathbf{Q}[G]$-module $\mathbf{Q}[G/H_K]$, and similarly for $L$ (cf.\ Lemma 2 in \cite{Perlis2}).  The hypothesis hence implies that $\rho(C)$-invariant subspaces of $\mathbf{Q}[G/H_K]$ and $\mathbf{Q}[G/H_L]$ have the same dimensions for all cyclic subgroups $C \subseteq N$. By \cite{Serre2}, Cor.\ to Thm.\ 30, this implies that the representations $\rho_K$ and $\rho_L$ are isomorphic, and this is the same as Ga{\ss}mann equivalence.

For (ii) $\Rightarrow$ (i), Ga{\ss}mann has proved the purely group theoretical fact that if $H_K$ and $H_L$ are Ga{\ss}mann equivalent, then for every cyclic subgroup $C$ of $G$, the \emph{coset types} of $(G,H_K,C)$ and $(G,H_L,C)$ coincide. Here, the coset type of $(G,H,C)$ is the list of integers $(f_1,\dots,f_r)$ (in increasing order) defined by $f_i:=|H\tau_iC|/|H|$ if $G=\bigcup H \tau_i C$. It remains to show the number theoretical fact that the coset type of $(G,H_K,C)$ is the splitting type of $\p$, if $C$ is a (cyclic) decomposition group of (an unramified) $\p$. But this follows from the cited theorem of Hasse (see \cite{Perlis2}, Lemma 1, part (b) for the case of number fields, but the proof is the same for function fields). 
Now one can leave out a set of Dirichlet density zero, this only weakens the statement.   
\end{proof}

\section{Equivalences for the Weil zeta function}

Let us first present an example that explicitly shows the failure of the analogue of the equivalence of (iii) to the other parts of Theorem \ref{thm1} for function fields and the Weil zeta function:

\begin{example} \label{malakie} We show an explicit example where two function fields are not Ga{\ss}mann equivalent, but do have the same Weil zeta functions:
$$\xymatrix{ 
 & N= \bF_3(\sqrt{T},\sqrt{T+1})  \ar@{-}[dl]_{H_1} \ar@{-}[dr]^{H_2} \ar@{-}[dd]_G &\\
K= \bF_3(\sqrt{T})   \ar@{-}[dr]          & &     L= \bF_3(\sqrt{T+1}) \ar@{-}[dl] \\
 &                 F= \bF_3(T)                 &  }$$
Then $H_1=H_2=\bZ/2$ and $\Gal(N/F)=H_1 \times H_2$. If we pick $c\neq 1$ in $H_1$, then its conjugacy class intersects $H_1$ in $\{c\}$ and doesn't intersect $H_2$, so $K$ and $L$ are not Ga{\ss}mann-equivalent,  
but since $K$ and $L$ are isomorphic by the map sending the element $T$ to $T+1$, 
their zeta functions are equal. 
\end{example}

We now present an easy unified proof of the following statement for global fields:

\begin{prop} \label{esa}
If $K$ and $L$ are Ga{\ss}mann equivalent global fields, then they have the same Weil zeta function. 
\end{prop}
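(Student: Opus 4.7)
The plan is to deduce the equality of Weil zeta functions from Artin's formalism for $L$-functions, which applies uniformly to number fields and to function fields, and thus gives the desired ``unified'' proof over global fields. The backbone of the argument is the classical identification
\[
\zeta_K(s) = L(s, \rho_K, N/F), \qquad \rho_K := \Ind_{H_K}^G 1_{H_K},
\]
where $G = \Gal(N/F)$. The identity holds at unramified primes because the local Euler factor of the Artin $L$-function involves the characteristic polynomial of a Frobenius element acting on the representation space of $\rho_K$, which, for a permutation representation, encodes exactly the splitting of $\p$ in $\cO_K$; it extends to ramified primes by the standard convention of taking inertia invariants. (As already noted in the excerpt, discrepancies at the finitely many places above $\infty$ are irrelevant to our statement.)

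Next I would record the purely group-theoretical fact that Ga{\ss}mann equivalence is equivalent to an isomorphism of $\C[G]$-modules $\rho_K \cong \rho_L$. This is already implicit in the proof of Proposition~\ref{blub}: the character of the permutation representation $\rho_K$ evaluated at $g \in G$ counts the fixed points of $g$ on $G/H_K$, and a standard orbit-stabilizer computation gives
\[
\chi_{\rho_K}(g) = \frac{|C_G(g)| \cdot |[g]\cap H_K|}{|H_K|}.
\]
Ga{\ss}mann's condition $|[g]\cap H_K| = |[g]\cap H_L|$ for all $g\in G$ (with $|H_K|=|H_L|$ obtained by taking $g=1$) makes the two characters agree pointwise, and hence $\rho_K\cong\rho_L$.

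Finally, the Artin formalism (additivity under direct sums and invariance under isomorphism of representations) applied to this isomorphism yields $L(s,\rho_K) = L(s,\rho_L)$, which via the displayed identity above is the equality $\zeta_K(s) = \zeta_L(s)$. The main thing to check — and the only potential obstacle — is that the version of the Weil zeta function used in the paper (summing over ideals of $\cO_K$, working up to finite Euler factors at infinity) matches the Artin $L$-function $L(s,\rho_K,N/F)$ in the relevant sense; this is routine once one is careful about the local factors at the finitely many places lying above $\infty$ and above the primes ramifying in $N/F$, but it is not something I would belabor given the conventions already fixed in the introduction.
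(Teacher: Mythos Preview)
Your argument is correct and is essentially the same as the paper's: identify Ga{\ss}mann equivalence with $\rho_K\cong\rho_L$ as $G$-representations, then invoke the Artin formalism $\zeta_K(s)=L(s,\rho_K,N/F)$ (the paper cites Kani--Rosen for this identity) to conclude $\zeta_K=\zeta_L$, handling the finitely many Euler factors at infinity via arithmetic equivalence. Your version spells out the character computation behind $\rho_K\cong\rho_L$ more explicitly, but the strategy is identical.
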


\begin{proof}
$K$ and $L$ are Ga{\ss}mann equivalent if and only if the representations $\Ind_{H_K}^G 1$ and $\Ind_{H_L}^G 1$ are isomorphic (e.g., \cite{Perlis2}, p.\ 301-302). By Kani and Rosen (\cite{KR}, p.\ 232, property (d)), we have an expression for the Weil zeta function (in the notation of Serre \cite{Serre} for general zeta functions of schemes over $\Z$): 
$$ \zeta_K(s) =  \zeta(N^{H_K},s) \cong L(Z,\Ind_{H_K}^G 1; s) \cong L(Z, \Ind_{H_L}^G 1; s) = \zeta(N^{H_L},s)=\zeta_L(s).$$
Here, $\cong$ means equality up to the infinite Euler factors, which match up anyhow since $K$ and $L$ are arithmetically equivalent by Proposition \ref{blub}.
\end{proof}

\section{Equivalences for the Goss zeta function}

An elementary explanation of the failure of the Weil zeta function to detect arithmetic equivalence is that it provides a counting function only for the number of ideals of a given \emph{degree} in a function field. Now David Goss in \cite{Goss} (cf.\ \cite{Gossbook}) has introduced a completely different zeta function for function fields, which is based on a norm that does not take values in $\Z$ (or rather $q^{\Z}$), but in the ring of integers of the ground function field $F$ itself. So this has some more hope of capturing the splitting type of primes, and investigating this situation is the aim of this section. 

Fix the place $\infty=T^{-1}$ of $F$ of degree one, let $A=\F_q[T]$ denote the corresponding ring of integers in $F$, and let $\cO:=\cO_K$ denote the integral closure of $A$ in $K$ (and similarly for $L$). Let $N$ denote the "norm map" from ideals of $\cO$ to ideals of $A$ defined as follows: if $\B$ is a prime ideal of $\cO$ lying above the monic ireducible polynomial $\p \in A$ given by $(\p)=\B \cap A$, then we set $N(\B):=\p^{\deg(\B)/\deg(\p)}$, and extend this multiplicatively to all ideals of $\cO$. Now Goss' zeta function is 
$$ \zeta_K^{[p]}(s):= \sum_{I} \frac{1}{N(I)^s}. $$
This is defined for $s$ in a certain "plane" $S_\infty = \C_F^* \times \Z_p$, where $\C_F$ is the completion of an algebraic closure of the completion $F_\infty$ of $F$ w.r.t.\ $T^{-1}$, and $\Z_p$ is the ring of $p$-adic integers. The Goss zeta function takes values in $F_\infty$. Note also that strictly speaking, the Goss zeta function depends on the choice of the place $\infty$, hence on the ring of integers $\cO$, but we will not indicate this in the notation.  For now, it is not important to know how to define the power $N(I)^s$ for such general $s$, it suffices for what follows to define it for $s \in \Z_{>0}$. Such functions were considered earlier by Carlitz (e.g., \cite{Carlitz}) and Goss (\cite{GossStaudt}). One can actually show the following: 

\begin{prop}[Goss, \cite{Gossbook}, p.\ 260] A general function field Dirichlet series $D(s):=\sum_{I}c(I)
N(I)^{-s}$ for some $c(I) \in F_\infty$, with nontrivial half plane of convergence in $s$, is uniquely determined by its values $D(j)$ at sufficiently large integers $j$.
\end{prop}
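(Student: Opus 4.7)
The plan is to exploit the 2-parameter structure of Goss's domain $S_\infty = \C_F^* \times \Z_p$ together with a density/continuity argument. After replacing $D$ with the difference of two candidate Dirichlet series, it suffices to show that $D(j) = 0$ for all integers $j \geq J_0$ forces $D \equiv 0$ on $S_\infty$.

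First, I would regroup. Each ideal $I$ of $\cO$ has $N(I) = (a)$ for a unique monic $a \in A$, and only finitely many $I$ map to each $(a)$, so $D(s) = \sum_{a\text{ monic}} C(a) a^{-s}$ with $C(a) := \sum_{N(I)=(a)} c(I) \in F_\infty$. Writing $a^{-s} = x^{-\deg a}\langle a\rangle^{-y}$ for $s = (x,y)$ and $\langle a\rangle := a/T^{\deg a} \in 1+T^{-1}\F_q[[T^{-1}]]$, and grouping by degree, yields
\[
D(s) = \sum_{n\geq 0} x^{-n} b_n(y), \qquad b_n(y) := \sum_{\substack{a\text{ monic}\\ \deg a=n}} C(a)\,\langle a\rangle^{-y}.
\]
Each $b_n\colon \Z_p \to F_\infty$ is a finite sum of continuous functions, hence continuous in $y$, and $|b_n(y)| \leq B_n := \max_{\deg a=n}|C(a)|$ because $|\langle a\rangle^{-y}|=1$. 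The assumption that the series converges somewhere forces $|C(a)|\cdot|x_0|^{-\deg a} \to 0$ at some $s_0 = (x_0,y_0)$, hence a geometric bound $B_n \leq c\,|x_0|^n$.

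The heart of the proof would be an inductive extraction showing $b_n \equiv 0$ for every $n$. For the base case, I would fix $y_0 \in \Z_p$ and choose positive integers $j_k \to \infty$ with $j_k \to y_0$ $p$-adically (possible by inflating the $p$-adic partial sums of $y_0$ with a large power of $p$). Continuity of $y \mapsto \langle a\rangle^{-y}$ gives $b_0(j_k) \to b_0(y_0)$, while the geometric bound yields
\[
\left|\sum_{n\geq 1} T^{-j_k n} b_n(j_k)\right| \;\leq\; \max_{n\geq 1}\; c\,(|x_0|\,q^{-j_k})^n \;\longrightarrow\; 0
\]
as $j_k \to \infty$, the ultrametric maximum being attained at $n=1$ once $|x_0|q^{-j_k}<1$. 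Combined with $D(j_k)=0$ these force $b_0(y_0)=0$, and varying $y_0$ gives $b_0 \equiv 0$. The inductive step would be identical: given $b_0 = \cdots = b_{n-1} \equiv 0$, multiplying $D(j)=0$ by $T^{jn}$ isolates $b_n(j_k) + o(1) = 0$ and the same extraction yields $b_n \equiv 0$. Once all $b_n$ vanish, $D \equiv 0$ on $S_\infty$.

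I expect the main obstacle to be reconciling two a priori incompatible demands on the test sequence: the $F_\infty$-norm decay $|T^{-j}| = q^{-j}$ suppresses higher-order terms only if $j_k \to \infty$ as ordinary integers, while passing limits inside $b_n$ requires $j_k \to y_0$ in the $p$-adic topology on $\Z_p$. The resolution is that $\Z_{>0}$ is dense in $\Z_p$ and may be inflated by arbitrarily large powers of $p$ without disturbing its $p$-adic class, so a single sequence of positive integers can meet both conditions simultaneously.
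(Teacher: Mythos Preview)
The paper does not give its own proof of this proposition; it is quoted from Goss's book (p.\ 260) and invoked as a black box, with only the remark that the complex-analytic analogue is classical (Hardy--Riesz). Your argument is correct and is essentially the standard one found in Goss: regroup by degree into $D(x,y)=\sum_{n\ge 0} x^{-n} b_n(y)$ with each $b_n$ a finite sum of terms $C(a)\langle a\rangle^{-y}$ and hence continuous on $\Z_p$, then choose sequences of positive integers $j_k$ that simultaneously tend to $+\infty$ in the archimedean sense (so the ultrametric tail $\sum_{m>n}T^{-j_k(m-n)}b_m(j_k)$ dies) and converge $p$-adically to a prescribed $y_0$ (so $b_n(j_k)\to b_n(y_0)$), and peel off $b_n\equiv 0$ by induction on $n$. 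The only cosmetic point is that from convergence at a single $s_0$ you get boundedness of the terms rather than a decay rate, but boundedness is all the ultrametric estimate needs, and you use exactly that.
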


This proposition, of which the analogue for general complex valued Dirichlet series is well known (\cite{Hardy} Thm.\ 6), implies that in our situation, we have:

\begin{cor} For both number fields and function fields, and for Dirichlet, respectively for both the Weil and Goss zeta functions, we have that 
 the condition 
\begin{enumerate}
\item[\textup{(iii)}] $\zeta_K(s)=\zeta_L(s)$ for all $s$
\end{enumerate}
 is equivalent to the condition 
\begin{enumerate}
\item[\textup{(iii')}] $\zeta_K(j)=\zeta_L(j)$ for all sufficiently large integers $j$. 
\end{enumerate}
\end{cor}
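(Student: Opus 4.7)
The direction (iii) $\Rightarrow$ (iii') is immediate, so the content of the corollary is the converse. The plan is to reduce everything to the cited uniqueness proposition (Goss, \cite{Gossbook}, in the function field case; Hardy Thm.~6 in the classical case) applied to the difference $D(s) := \zeta_K(s) - \zeta_L(s)$.

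First, I would rewrite each zeta function as a Dirichlet series in the common variable living in the ground ring. For the Goss zeta function, grouping ideals of $\cO_K$ by their norm in $A$ gives
$$ \zeta^{[p]}_K(s) = \sum_{J \subseteq A} a_K(J) \, N(J)^{-s}, \qquad a_K(J) := \#\{ I \subseteq \cO_K : N(I) = J \} \in \F_q \subseteq F_\infty,$$
and similarly for $L$; then
$$ D(s) = \sum_{J \subseteq A} \bigl(a_K(J) - a_L(J)\bigr) \, N(J)^{-s}$$
is a function field Dirichlet series of exactly the shape $\sum_I c(I) N(I)^{-s}$ with $c(I) \in F_\infty$ to which Goss' proposition applies. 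The same regrouping works for the Weil zeta function (where it becomes a power series in $q^{-s}$, so the classical uniqueness of power series applies a fortiori) and the Dedekind case is the standard complex Dirichlet series setting.

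Second, I would check the hypothesis of the uniqueness proposition, namely that $D$ has a nontrivial half plane of convergence. This is automatic: each of $\zeta_K$ and $\zeta_L$ converges on a common half plane (inside the ``plane'' $S_\infty$ in the function field setting, and inside $\mathrm{Re}(s) > 1$ in the classical setting), so their difference does too.

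Finally, the hypothesis (iii') says $D(j) = 0$ for all sufficiently large integers $j$. The uniqueness proposition then forces $D \equiv 0$, whence $\zeta_K(s) = \zeta_L(s)$ for all admissible $s$, which is (iii). The only subtle point is the verification that $D$ fits the hypotheses of Goss' proposition, but as observed above this is straightforward from the positive-characteristic norm regrouping. No separate argument is required for the different zeta functions: they are treated uniformly by picking the appropriate uniqueness theorem (Goss or Hardy) for the target ring in which the coefficients live.
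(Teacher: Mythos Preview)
Your proposal is correct and follows precisely the approach the paper intends: the paper gives no proof beyond the remark that the corollary ``follows from'' the preceding proposition of Goss (and its classical analogue, Hardy's Thm.~6), and your argument is exactly the natural unpacking of that remark---form the difference $D(s)$, observe it is a Dirichlet series of the required type with a nontrivial half plane of convergence, and invoke the uniqueness statement. One minor comment: for the Weil zeta function you could equally well invoke Hardy's theorem directly (it is an ordinary complex Dirichlet series), or simply note that $Z_K(u)-Z_L(u)$ is a rational function of $u=q^{-s}$ vanishing at infinitely many points; your ``power series uniqueness'' phrasing is slightly imprecise but the conclusion is of course correct.
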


\begin{thm} \label{fuit}
If $[L:F]<p$ and $[K:F]<p$, then equality of the Goss zeta functions of $K$ and $L$ is the same as arithmetic equivalence of $K$ and $L$. 
\end{thm}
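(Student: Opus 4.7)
\emph{Plan.} The equivalence has two directions; the implication from Goss zeta equality to arithmetic equivalence is the one that genuinely uses the degree hypothesis $p > \max\{[K:F],[L:F]\}$.

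\emph{(Arithmetic equivalence $\Rightarrow$ equality of Goss zeta functions.)} Proposition \ref{blub} identifies arithmetic equivalence with Ga{\ss}mann equivalence, and Proposition \ref{esa} gives $\zeta_K(s)=\zeta_L(s)$. Grouping the Weil Euler product by primes of $F$,
$$ \zeta_K(s) = \prod_\p \prod_{\B | \p} (1-q^{-f(\B|\p)\deg(\p)s})^{-1}, $$
and using standard characteristic-zero Dirichlet-series uniqueness applied to the local factor at each $\p$ (e.g.\ via the logarithmic derivative, isolating $\p$ by Dirichlet density), I conclude that the multiset of residue degrees $\{f(\B|\p):\B | \p\}$ is the same for $K$ and $L$ at every prime $\p$ of $A$, ramified or not. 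But the Goss local Euler factor $\prod_{\B|\p}(1-\p^{-f(\B|\p)s})^{-1}$ depends only on that multiset, so the Goss zeta functions agree. This direction needs no degree bound.

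\emph{(Equality of Goss zeta functions $\Rightarrow$ arithmetic equivalence.)} Conversely, assume $\zeta^{[p]}_K(s) = \zeta^{[p]}_L(s)$. Via Goss's uniqueness proposition cited just above the theorem and the Euler product, I extract for each $\p$ the characteristic-$p$ polynomial identity
$$ \prod_{\B | \p \textrm{ in } K}(1-x^{f(\B|\p)}) = \prod_{\B | \p \textrm{ in } L}(1-x^{f(\B|\p)}),\qquad x := \p^{-s}. $$
In general the Frobenius identity $(1-x^m)^p=1-x^{pm}$ obstructs the unique recovery of the exponents, but under $[K:F],[L:F]<p$ this obstruction vanishes: every $f(\B|\p) \leq [K:F] < p$, so each factor $1-x^{f(\B|\p)}$ is separable; and the total number of factors is bounded by $\sum_\B f(\B|\p) \leq [K:F] < p$, so no factor occurs $p$ or more times on either side. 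Thus no Frobenius collapse can occur, and comparing roots in $\overline{\F}_q$ with multiplicity recovers the multiset $\{f(\B|\p)\}$. At every unramified $\p$ this multiset is the splitting type, so $K$ and $L$ share splitting types outside the finite (density zero) set of primes ramifying in $N/F$, yielding arithmetic equivalence.

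\emph{Main obstacle.} The delicate step is the characteristic-$p$ uniqueness argument: one must verify carefully that the bound $p > [K:F]$ rules out precisely the $p$-fold Frobenius collapse, which is the only source of non-unique factorization. The sharpness of the bound is foreshadowed by part (ii) of the main theorem — for a Galois $p$-extension $K/F$ ramified only at infinity, $\zeta^{[p]}_K(s) = \zeta^{[p]}_F([K:F]s)$ masks all splitting data. A secondary technical point is the passage from Goss zeta equality to local factor equality, which is routine in characteristic zero but requires the invocation of Goss's Dirichlet-series uniqueness proposition here.
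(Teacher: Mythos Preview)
Your treatment of the substantive direction (Goss zeta equality $\Rightarrow$ arithmetic equivalence) is correct but proceeds differently from the paper. The paper expands $\zeta_K^{[p]}(s)=\sum_n B(n)n^{-s}$, then uses the recursion
\[
B(\p^f)=C(\p^f)+[\text{function of }C(\p^{f'}),\,f'<f]
\]
to recover each integer $C(\p^f)$ (the number of primes of norm $\p^f$) from its residue mod $p$, invoking $C(\p^f)<[K:F]<p$. You instead compare the local Euler polynomials $\prod_{\B\mid\p}(1-x^{f(\B\mid\p)})$ in $\F_p[x]$ and argue that, since the total degree $\sum_\B f(\B\mid\p)\le[K:F]<p$ and each $f<p$, the root-with-multiplicity data (primitive $d$-th roots of unity occurring with multiplicity $\#\{\B:d\mid f(\B\mid\p)\}<p$) determines the multiset $\{f(\B\mid\p)\}$ by M\"obius inversion. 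Both arguments are valid; the paper's is a hands-on coefficient recursion, yours gives a cleaner structural picture of exactly which collapses the bound $p>[K:F]$ excludes.

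Your converse direction, however, has a genuine error. You pass through Proposition~\ref{esa} to get $\zeta_K=\zeta_L$ for the Weil zeta functions, and then claim to extract the local factor at each individual prime $\p$ of $A$ by ``Dirichlet-series uniqueness''. This is impossible: the Weil zeta function is a rational function of $u=q^{-s}$ and sees only $\deg(\p)$, not $\p$ itself, so primes of equal degree are indistinguishable. Example~\ref{malakie} is an explicit counterexample---$K=\F_3(\sqrt{T})$ and $L=\F_3(\sqrt{T+1})$ have identical Weil zeta functions, yet $T$ ramifies in $K$ and splits in $L$. The correct route bypasses the Weil zeta function: Ga{\ss}mann equivalence directly forces the multiset $\{f(\B\mid\p)\}$ to agree at every $\p$ (at unramified $\p$ this is the coset-type argument in Proposition~\ref{blub}; at the finitely many ramified $\p$ one uses the Artin formalism as in the proof of Proposition~\ref{esa}), whence the Goss Euler factors match termwise. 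The paper itself leaves this direction implicit.
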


\begin{proof}
We observe that the Goss zeta function can be written as 
\begin{equation} \label{g} \zeta_K^{[p]}(s) = \sum_{n \in A} \frac{B(n)}{n^s}, \end{equation}
where we sum over monic $n \in A$, and $B(n)$ is the number of ideals of $\cO_K$ of norm $n \in A$, but modulo $p$, since the value is in $F_\infty$. For an integer $f$ and an irreducible element $\p$ of $A$, we then know the integer $C(\p^f)$ modulo $p$, where $C(\p^f)$ is the number of \emph{prime} ideals of $\cO_K$ with norm $\p^f$. Indeed, doing the combinatorics (e.g., Klingen \cite{Klingen}, proof of Thm.\ 2.1 on p.\ 16), we find:
\begin{eqnarray} \label{g2} B(\p^f) &=& \sum_{{{r_1 f_1 + \dots r_s f_s = f}\atop{1 \leq f_1 < \dots < f_s}}\atop{1 \leq r_1, \dots, r_s}}  \prod_{i=1}^s {{C(\p^{f_i})+r_i-1} \choose{r_i}} \\ &=& C(\p^f) + [\mbox{ function of } C(\p^{f'}) \mbox { for } f'<f].          \end{eqnarray}
However, since $C(\p^f)<[K:F]<p$, we know the integer $C(\p^f)$ on the nose. Now knowing all integers $C(\p^f)$ implies knowing the splitting types of all $\p \in A$ (namely, for $1 \leq f \leq [K:F]$, put $f$ in the splitting type of $\p$ exactly $C(\p^f)$ times).
Hence equality of Goss zeta functions implies equality of splitting types. 
\end{proof}

\begin{rem}
The formula in \cite{Perlis1}, p.\ 346, expressing the corresponding $C$'s in terms of the $B$'s should be replaced by (\ref{g2}), but this does not effect the argument in \cite{Perlis1}, since it only uses that the $C$'s are polynomially expressible in terms of the $B$'s.
\end{rem}

However, if the degree condition in the above theorem is not met, then the conclusion is false. For example, we have: 

\begin{prop} \label{funny} Suppose that $K$ is a Galois extension of $F$ with Galois group $G$ a $p$-group, only ramified above $\infty$. Then the Goss zeta function of $K$ satisfies $$\zeta^{[p]}_K(s)=\zeta^{[p]}_F(|G|s).$$ In particular, equality of Goss zeta functions does not imply arithmetic equivalence in general, not even in the case of Galois extensions. 
\end{prop}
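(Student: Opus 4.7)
The plan is to compute the Euler product of the Goss zeta function of $K$ factor by factor and then collapse each local factor using the characteristic-$p$ Frobenius identity $(1-x)^{p^m}=1-x^{p^m}$, which holds inside $F_\infty$. With $|G|=p^n$, the key point is that at every finite prime $\p$ of $A$, the hypothesis that $K/F$ is an unramified Galois $p$-extension locally forces the splitting type $(f,g)$ to consist of $p$-powers with $fg=|G|$, which is precisely the configuration that the Frobenius identity collapses.

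Concretely, I would first record the Euler product, valid on the domain of convergence and extended to all $s$ by the uniqueness corollary above:
$$\zeta^{[p]}_K(s)\;=\;\prod_{\B\subset\cO_K}\frac{1}{1-N(\B)^{-s}}\;=\;\prod_{\p}\prod_{\B\mid\p}\frac{1}{1-N(\B)^{-s}}.$$
At each finite $\p\in A$, unramified by hypothesis, $K/F$ Galois with group of order $p^n$ gives $\p\cO_K=\B_1\cdots\B_g$ with a common residue degree $f$ satisfying $fg=|G|$, so both $f=p^a$ and $g=p^b$ are forced to be $p$-powers with $a+b=n$. Since $N(\B_i)=\p^{\deg(\B_i)/\deg(\p)}=\p^f$, the local factor reduces, using Frobenius in $F_\infty$, to
$$\prod_{i=1}^{g}\frac{1}{1-N(\B_i)^{-s}}\;=\;\frac{1}{(1-\p^{-fs})^{p^b}}\;=\;\frac{1}{1-\p^{-fp^{b}s}}\;=\;\frac{1}{1-\p^{-|G|s}}.$$
Assembling over all finite primes yields
$$\zeta^{[p]}_K(s)\;=\;\prod_{\p}\frac{1}{1-\p^{-|G|s}}\;=\;\zeta^{[p]}_F(|G|s),$$
which is the desired identity.

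For the \emph{in particular} claim, observe that the resulting formula depends only on the invariant $|G|$, not on the internal structure of $K$, so any two non-isomorphic Galois $p$-extensions of $F$ of the same degree that are ramified only above $\infty$ automatically share the same Goss zeta function. To supply specific non-arithmetically-equivalent $K,L$ I would invoke Artin--Schreier theory: for distinct $a_1,a_2\in A$ the fields $K_i=F[y]/(y^p-y-a_i)$ can be chosen mutually non-isomorphic and ramified only at $\infty$; inside the compositum $N=K_1K_2$, the group $\Gal(N/F)\simeq\Z/p\times\Z/p$ is abelian, its conjugacy classes are singletons, and the distinct index-$p$ subgroups $H_{K_1},H_{K_2}$ then fail the Ga\ss{}mann condition, so by Proposition \ref{blub} the fields $K_1,K_2$ are not arithmetically equivalent.

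No substantive obstacle is expected. The entire argument is a short Euler-product manipulation; the only non-formal ingredient is the observation that unramified splitting in a Galois $p$-extension produces $p$-power data $(f,g)$, which is exactly what the Frobenius identity needs in order to collapse the local factor.
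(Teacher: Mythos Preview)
Your argument is correct, and it takes a genuinely different (and arguably cleaner) route than the paper. The paper computes the Dirichlet coefficients $B(\p^f)\bmod p$ by letting the $p$-group $G$ act on the set of ideals of $\cO_K$ with norm $\p^f$ and observing that only the $G$-fixed ideals $(\p\cO_K)^r$ survive modulo $p$; this yields $B(\p^f)\equiv 1$ or $0$ according as $|G|\mid f$ or not, and then the Dirichlet series is summed. You instead stay on the multiplicative side: at each unramified $\p$ the Galois $p$-group hypothesis forces $f,g$ to be $p$-powers with $fg=|G|$, and the Frobenius identity $(1-x)^{g}=1-x^{g}$ in $F_\infty$ collapses the local factor to $(1-\p^{-|G|s})^{-1}$ in one line. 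Your approach avoids the ideal combinatorics entirely and makes transparent exactly which feature of characteristic $p$ is responsible for the collapse; the paper's orbit-counting argument, on the other hand, generalises more readily if one ever wanted to analyse $B(n)\bmod p$ for non-prime-power $n$ or in non-Galois situations.

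For the \emph{in particular}, the paper proposes two extensions of equal degree $p^2$ with non-isomorphic Galois groups ($\Z/p^2$ versus $\Z/p\times\Z/p$). Your Artin--Schreier pair inside an abelian compositum works just as well and is essentially the same mechanism as Example~\ref{malakie}: in an abelian $G$ the conjugacy classes are singletons, so distinct subgroups can never be Ga{\ss}mann equivalent. Either construction suffices; yours has the virtue of already living in degree $p$.
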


\begin{proof}
This is proven in \cite{Gossbook}, Remark 8.18.7.2 using Artin factorisation, but we present an elementary computation. The set of ideals of $\cO_K$ of norm $\p^f$ admits an action of $G$. The number of elements in the orbit of a given ideal $I$ is congruent to $0$ or $1$ mod $p$, depending on whether the action of $G$ on $I$ is nontrivial or trivial, respectively. Since the action of $G$ permutes the prime ideals above $\p$ transitively, if $I=\B_1^{a_1}\dots\B_s^{a_s}$ for some prime ideals $\B_i$ and some integers $a_i$, the action of $G$ on $I$ is only trivial if $I=(\B_1\dots\B_s)^r=(\p\cO_K)^r$ for some integer $r$. But then $N(I)=\p^{s|G|}$ so we conclude that $$B(\p^f)= \left\{ \begin{array}{ll} 0 \mbox{ mod } p & \mbox{ if } |G| \nmid f \\ 1 \mbox{ mod } p & \mbox{ if } |G| \mid f \end{array} \right.. $$ Now $B$ is multiplicative on coprime ideals, so we can compute  
\begin{eqnarray*}
\zeta^{[p]}_L(s) &=& \sum_{n \in A-\{0\}} \frac{B(n)}{n^s} = \prod_{\p} \sum_{k=0}^\infty \frac{B(\p^k)}{\p^{ks}} = \prod_{\p} \sum_{k=0}^\infty \frac{1}{\p^{|G|ks}} \\ &=& \prod_{\p} \frac{1}{1-\p^{-|G|s}}=\sum_n \frac{1}{n^{|G|s}} = \zeta^{[p]}_F(|G|s).
\end{eqnarray*}

For the last claim, just choose two $p$-extensions ramified above $\infty$ with different Galois groups of the same order, e.g.\ $\Z/p \times \Z/p$ versus $\Z/p^2$ given as $X^{p^2}-X=T$ and the corresponding equation in truncated Witt vectors, respectively. 
\end{proof}

\begin{rem}
The above proposition is really about the zeta functions \emph{without} the infinite (here, ramified) factors, as should also become clear from the proof. It is easy to include the correct factors at $\infty$ depending on the ramification, cf. also Section 8.10 in \cite{Gossbook}. 
\end{rem}

\begin{cor} \label{for}
The Goss zeta function of a function field does not determine the Weil zeta function of a function field in general.
\end{cor}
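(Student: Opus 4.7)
The plan is to leverage Proposition \ref{funny} directly: since every Galois $p$-extension of $F$ ramified only above $\infty$ has Goss zeta function $\zeta^{[p]}_F(ns)$ depending only on its degree $n$, any two such extensions of the same degree will automatically have identical Goss zeta functions. To deduce the corollary it therefore suffices to exhibit two such extensions of the same degree whose Jacobians are not $F$-isogenous, because by the theorem of Tate and Turner (cited as the second unnumbered theorem in the introduction) non-isogenous Jacobians are equivalent to distinct Weil zeta functions.

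For the explicit pair I would use Artin--Schreier covers $K_f:=F(Y)$ with $Y^p-Y=f(T)$ for $f\in A=\bF_q[T]$. Each such $K_f$ is a $\bZ/p$-Galois extension of $F$ ramified only at $\infty$, so Proposition \ref{funny} applies uniformly and yields $\zeta^{[p]}_{K_f}(s)=\zeta^{[p]}_F(ps)$ independently of the choice of $f$. Selecting $f_1=T$ produces $K_{f_1}=\bF_q(Y)$, the rational function field (genus $0$), since $T=Y^p-Y\in\bF_q[Y]$. Selecting $f_2$ of degree $m>1$ coprime to $p$ (for instance $m=2$ if $p$ is odd and $m=3$ if $p=2$) produces, by the standard Riemann--Hurwitz calculation at the unique wildly ramified place above $\infty$, a curve of genus $(p-1)(m-1)/2>0$.

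Two curves with Jacobians of different dimensions cannot be isogenous, so by Tate--Turner $\zeta_{K_{f_1}}\ne \zeta_{K_{f_2}}$, while $\zeta^{[p]}_{K_{f_1}}=\zeta^{[p]}_{K_{f_2}}$ by the previous paragraph; this is exactly the assertion of the corollary. I do not anticipate a genuine obstacle: the entire argument is a packaging of Proposition \ref{funny} together with the textbook Artin--Schreier genus formula, and the only real work has already been done in proving Proposition \ref{funny}. One could equally well use the $\bZ/p\times\bZ/p$ versus $\bZ/p^2$ pair already isolated at the end of the proof of Proposition \ref{funny}, but the $\bZ/p$ construction above is more transparent and furthermore shows that the phenomenon persists in every characteristic and at the smallest possible degree.
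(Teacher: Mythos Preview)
Your proposal is correct and follows essentially the same route as the paper: both arguments invoke Proposition \ref{funny} and then exhibit two Artin--Schreier extensions $Y^p-Y=T^m$ (the paper varies $m$, you take $m=1$ versus $m>1$ coprime to $p$) of equal degree $p$ but different genera $(p-1)(m-1)/2$. The only cosmetic difference is that the paper concludes ``different genus $\Rightarrow$ different Weil zeta'' from the fact that the numerator of $Z_K(u)$ has degree $2g$, whereas you route this through Tate--Turner; either justification is immediate.
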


\begin{proof}
If we construct two $p$-group extensions of $F$ only ramified above $\infty$, with different genera but the same degree, we are done, since the Weil zeta function as rational function in $q^{-s}$ has a numerator of degree twice the genus of the extension. This is clearly possible, for example by varying the Hasse conductor (e.g., the cyclic $p$-extension $X^p-X=T^m$ has genus $\frac{1}{2}((mp-p-m+1)$, varying with $m$). \end{proof}

\begin{rem} \label{gossrem}
If we denote an exponent $s$ in the full Goss plane $S_\infty=\C_F^* \times \Z_p$ as $s=(x,y)$ with $x \in \C_F^*$ and $y \in \Z_p$, then the definition of $s$-exponentiation implies that $N(I)^{(x,0)}=x^{\deg(I)}$. Hence we can write
$$ \zeta_K^{[p]}((x,0)) = \sum_I x^{-\deg(I)} $$
as a function of $x \in \C_F^*$. 
On the other hand, the Weil zeta function is a function of $u=q^{-s}$:
$$ \zeta_K(s) = Z_K(u) = \sum_I u^{\deg(I)}. $$
So if we let the formal variable $u$ take values in $\C_F^*$, too, we find an identity
$$  \zeta_K^{[p]}((x,0)) = Z_K(x^{-1}) \mbox{ mod } p,$$
so the Goss zeta function encodes the Weil zeta function ``modulo $p$''. We thank David Goss for this remark. 

Since the numerator of $Z_K(u)$ is the characteristic polynomial of Frobenius, 
this implies that the Goss zeta function encodes this polynomial \emph{ mod } $p$. In particular, for an elliptic curve, the Goss zeta function does determine the Weil zeta function, since both are determined by the trace of Frobenius, which is bounded by $\sqrt{p}$ in absolute value. However, in higher genus, there are curves for which not even the trace of Frobenius is determined by its value mod $p$. 

Note that we have taken zeta functions relative to $A$ (so without the Euler factors at $\infty$), but this doesn't change the argument, one may just add these Euler factors and consider the `absolute' zeta functions. 
\end{rem}

\section{The lifted Goss zeta function, and equivalences} \label{LG}

The problem with the Goss zeta function is that the coefficients are 
in a field of characteristic $p$. Recall that $F_\infty = \F_q((T^{-1}))$ is the $T^{-1}$-adic completion of $F$. Let us denote by $W=W(F_\infty)$ the ring of Witt vectors of $F_\infty$. Let $\m$ denote its maximal ideal, and $\pi$ a fixed uniformizer.   Now $W$ is a ring of characteristic zero 
and there is a natural injection $\Z \rightarrow W$.
Let $\chi: F_\infty \rightarrow W$ denote the Teichm\"uller character, sending $F_\infty \ni x \mapsto 
\chi(x)=(x,0,\cdots)$. Recall that $\chi$ induces a group homomorphism on multiplicative groups. 
We define the \emph{lifted Goss zeta function} as
\begin{equation} \label{*zeta}
\zeta_K^{[0]}(x)=\sum_{I} \frac{1}{\chi\left(N(I)^s\right)}.
\end{equation}
Note that the multplicativity of $\chi$ implies that $$\zeta_K^{[0]}(x)=\sum_{I} \frac{1}{\chi\left(N(I)^s\right)} = \sum_{I} \chi \left( \frac{1}{N(I)^s} \right). $$ For integers $s$, we also have $$\zeta_K^{[0]}(x)=\sum_{I} \frac{1}{(\chi(N(I))^s},$$ and if we \emph{define} $s$-exponentiation on $x \in \chi(F_\infty)$ by $x=\chi(y) \mapsto \chi(y^s)$ (where $y \rightarrow y^s$ is Goss exponentiation), then the formula holds for any $s$ in the Goss complex plane. 
We further note that 
\begin{equation} \label{mod}
\zeta_K^{[p]}(s) \equiv \zeta_K^{[0]}(s) \mod \m.
\end{equation}
so that the lifted Goss zeta function contains all information that the Goss zeta function contains. Also, since $\chi \circ N$ from ideals of $\cO_K$ to $A$ is multiplicative, the lifted Goss zeta function has an Euler product
$$ \zeta^{[0]}_K(s) = \prod_{\B} \frac{1}{1-\chi(N(\B))^{-s}}. $$ 
Let $\Z \ni B(n)=\# \{ I: N(I)=n \}$ denote the number of ideals of $\cO_K$ of norm $n$, and let 
$$B(n)=\sum_{\nu=0}^\infty B_\nu(n)\pi ^\nu$$ be the $\pi$-adic expansion of $B(n)$.
Then we can write a `$\pi$-adic expansion' of the lifted Goss zeta function as follows
\begin{equation} \label{*zeta2}
\zeta_K^{[0]}(s)=\sum_{n \in A} \frac{B(n)}{\chi(n)^s}
=\sum_{\nu=0}^\infty  \pi^\nu \zeta_K^{p^\nu}(s), \mbox{ where }   \zeta_K^{p^\nu}(s):=\sum_{n\in A}
\frac{B_\nu(n)}{n^s} \in K_\infty, 
\end{equation}
again summing over monic $n \in A$. 

\begin{remarks}
\mbox{ }
\begin{enumerate}
\item[(i)] There is no problem with the convergence of this lifted Goss zeta function since it is a Witt vector of convergent sequences.
\item[(ii)] It is clear from (\ref{mod}) that $\zeta^{[0]}(s)=0$ implies $\zeta^{[p]}(s)=0$. In particular, the analogue of the Riemann hypothesis for $\zeta^{[p]}(s)$ (which seems to be that all `nontrivial' zeros in the Goss plane are in $K_\infty$, proven for rational function fields  in \cite{Wan}, \cite{Sheats}) implies its truth for $\zeta^{[0]}(s)$.  
\item[(iii)] The theory of `trivial zeros' and functional equation(s) of the Goss zeta function is very intriguingly nonclassical (\cite{Thakur}, \cite{Gossblogs}). Does the lifted Goss zeta function behave less mysteriously? Zeros of $\zeta_K^{[p]}$ need not extend to zeros of $\zeta_K^{[0]}$. We believe this point deserves more attention, especially in connection with simultaneously `lifting' the theory of Drinfeld modular forms and associated Galois representations to characteristic zero.  
\end{enumerate}
\end{remarks}

\begin{thm}\label{brr}
Let $K$ and $L$ be arbitrary geometrical extensions of $F=\F_q(T)$. The condition $\zeta^{[0]}_K(s)=\zeta^{[0]}_L(s)$ is the same as arithmetical equivalence of $K$ and $L$.
\end{thm}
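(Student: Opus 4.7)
The plan is to show that equality of the two lifted Goss zeta functions is equivalent, via their coefficients $B_M(n):=\#\{I\subseteq\cO_M\colon N(I)=n\}\in\Z$, to the pointwise equality $B_K(n)=B_L(n)$ for every monic $n\in A$, and then to connect the latter to arithmetic (equivalently Ga\ss mann) equivalence of $K$ and $L$ by way of the combinatorial formula (\ref{g2}) that already appeared in the proof of Theorem \ref{fuit}.

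For the forward implication $\zeta_K^{[0]}=\zeta_L^{[0]}\Rightarrow$ arithmetic equivalence, I would first exploit the $\pi$-adic expansion (\ref{*zeta2}) to write each side as $\sum_{\nu\ge 0}\pi^\nu\zeta_M^{p^\nu}$, where $\zeta_M^{p^\nu}$ is a function field valued Dirichlet series whose coefficients $B_\nu^M(n)\in\F_q$ are the $\pi$-adic digits of $B_M(n)$. Equating the two lifted series Witt-component by Witt-component, and then invoking Goss' uniqueness result for characteristic $p$ function field Dirichlet series (the unlabeled proposition attributed to Goss cited just above Theorem \ref{fuit}), I would deduce $B_\nu^K(n)=B_\nu^L(n)$ for all $n$ and all $\nu$, hence $B_K(n)=B_L(n)$ as honest integers. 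I would then invert the triangular system (\ref{g2}) inductively in $f$ to recover $C_K(\p^f)=C_L(\p^f)$ for every prime $\p$ of $A$ and every $f\ge 1$; crucially, the inversion now carries no characteristic obstruction, since the $B$'s are integers rather than residues modulo $p$. Reading off the splitting type of $\p$ as the multiset of $f$ with $C(\p^f)>0$ (with multiplicities $C(\p^f)$) recovers the splitting type at \emph{every} $\p$, which is much stronger than arithmetic equivalence.

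For the converse, Proposition \ref{blub} identifies arithmetic equivalence with Ga\ss mann equivalence, i.e.\ with $\Ind_{H_K}^G 1 \cong \Ind_{H_L}^G 1$ as $\Q[G]$-modules. I would then use the standard representation-theoretic consequence that for \emph{every} subgroup $D\le G$, the restrictions of these two permutation characters to $D$ agree, so that $D$ has identical orbit decompositions on $G/H_K$ and on $G/H_L$. Applying this with $D$ the decomposition (resp.\ inertia) group of an arbitrary prime $\p$ of $A$ in $N/F$, ramified or not, gives identical splitting types of $\p$ in $K$ and in $L$, hence $C_K(\p^f)=C_L(\p^f)$ for all $\p$ and $f$, and then $B_K(n)=B_L(n)$ for all $n\in A$ by multiplicativity of $B$ and the formula (\ref{g2}). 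Equality of the lifted Goss zeta functions follows term by term.

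The principal obstacle, and the precise reason lifting to characteristic zero repairs the defect of Theorem \ref{fuit}, is the unconditional invertibility of (\ref{g2}): in the characteristic $p$ setting one only knew $B(\p^f)$ modulo $p$, which forced the hypothesis $[K:F]<p$ to keep $C(\p^f)<p$, whereas the lifted coefficients are honest integers and no such restriction is needed. A subsidiary technical point is the legitimacy of extracting Witt components in the first step; I would need to verify that the $\pi$-adic expansion in (\ref{*zeta2}) converges in a sense compatible with Goss' uniqueness statement, so that each level $\zeta^{p^\nu}$ qualifies as a function field Dirichlet series to which that proposition applies.
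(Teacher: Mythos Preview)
Your proposal is correct and follows essentially the same route as the paper: recover the \emph{integers} $B(n)$ from the lifted zeta function and then invert (\ref{g2}) to obtain splitting types, the point being that no mod-$p$ ambiguity remains. The paper's proof is in fact much terser---it simply asserts that the expansion $\zeta_K^{[0]}(s)=\sum_n B(n)/\chi(n)^s$ determines the integers $B(n)$ and then invokes (\ref{g2})---so your reduction to the $\pi$-adic components $\zeta_K^{p^\nu}$ and the Goss uniqueness proposition is a legitimate way to fill in what the paper leaves implicit; moreover you supply the converse direction (arithmetic equivalence $\Rightarrow$ equal lifted zeta), which the paper's three-line proof does not even mention, though your passage from Ga{\ss}mann equivalence to equal Euler factors at \emph{ramified} primes would be cleanest via the Artin formalism for $\zeta^{[0]}$ noted after Corollary~\ref{crr}, rather than via bare orbit structures of $D$ and $I$.
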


\begin{proof}
Since 
$$ \zeta_K^{[0]}(s)=\sum_{n \in A} \frac{B(n)}{\chi(n)^s},$$
knowledge of the lifted Goss zeta function implies knowledge of the \emph{integers} (not just mod $p$) $B(n)$. By formula (\ref{g2}), this implies knowledge of the splitting type of all (unramified)  prime ideals of $K$. 
\end{proof}

\begin{cor} \label{crr} 
Knowledge of the lifted Goss zeta function implies knowledge of the Weil zeta function. 
\end{cor}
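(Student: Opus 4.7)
The plan is to observe that the proof of Theorem \ref{brr} already does the real work: from $\zeta^{[0]}_K(s)$ one can extract each individual coefficient $B(n) \in \Z$, not just its reduction modulo $p$. This is the whole point of the Teichmüller lift --- it replaces $B(n) \bmod p \in \F_q \subset F_\infty$ by its image under the injection $\Z \hookrightarrow W$, so the integer $B(n)$ itself is recovered once the Dirichlet series uniqueness (analogous to \cite{Hardy} Thm.\ 6, applied coefficient-wise in the $\pi$-adic expansion (\ref{*zeta2})) is invoked. First I would state this recovery step, citing the argument inside the proof of Theorem \ref{brr}.

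Next, I would rewrite the Weil zeta function as a Dirichlet series in the variable $q^{-s}$ whose coefficients are exactly sums of $B(n)$'s over norms of a given degree. The key identity is $\deg_A N(I) = \deg(I)$, which follows by multiplicativity from the definition $N(\mathfrak{P}) = \mathfrak{p}^{\deg(\mathfrak{P})/\deg(\mathfrak{p})}$. Therefore
\begin{equation*}
\zeta_K(s) \;=\; \sum_I q^{-s\deg(I)} \;=\; \sum_{d \geq 0} \Bigl( \sum_{\substack{n \in A \text{ monic}\\ \deg n = d}} B(n) \Bigr) q^{-ds},
\end{equation*}
so the Weil zeta function is manifestly a function of the collection of integers $\{B(n)\}_n$.

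Combining these two steps finishes the proof: the lifted Goss zeta function determines the integers $B(n)$, and those integers determine $\zeta_K(s)$. There is no real obstacle; the only place one has to be careful is to insist that $B(n)$ is recovered as an element of $\Z$ (via $\Z \hookrightarrow W$) rather than of $\F_q$, which is precisely what the lifting buys. As a sanity check, one notes that this corollary is entirely consistent with Remark \ref{gossrem}: the ordinary Goss zeta function only pins down $Z_K(u)$ modulo $p$, and the extra information contained in the higher Witt components $\zeta_K^{p^\nu}$ in the expansion (\ref{*zeta2}) is exactly what is needed to promote this congruence to an equality.
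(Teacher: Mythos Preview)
Your argument is correct, but it proceeds by a different route than the paper. The paper's proof is a one-line appeal to Proposition~\ref{esa}: by Theorem~\ref{brr} (just established) the lifted Goss zeta function determines the arithmetical equivalence class of $K$, hence (via Proposition~\ref{blub}) its Ga{\ss}mann class, and then Proposition~\ref{esa} says that Ga{\ss}mann-equivalent fields share the same Weil zeta function. In other words, the paper passes through the representation-theoretic machinery (Artin formalism, Kani--Rosen) rather than computing anything.

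Your approach is more direct and constructive: you extract the integers $B(n)$ from $\zeta_K^{[0]}$ and then observe that $\deg_A N(I)=\deg(I)$, so that $\zeta_K(s)=\sum_{d\geq 0}\bigl(\sum_{\deg n=d} B(n)\bigr)q^{-ds}$ is literally a function of the $B(n)$'s. This avoids any appeal to Ga{\ss}mann equivalence or induced representations, and in fact it produces an explicit formula for the Weil zeta function from the lifted Goss data. Your argument is essentially the content of the remark the paper places immediately \emph{after} Corollary~\ref{crr} (the identity $Z_K(u)=\zeta^{[0]}_K((\chi^{-1}(u),0))$), spelled out coefficient by coefficient. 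What the paper's route buys is brevity and a conceptual link to the rest of the exposition; what your route buys is an elementary, self-contained computation that does not rely on the Artin/Kani--Rosen formalism.
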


\begin{proof}
This follows from Proposition \ref{esa}. 
\end{proof}

\begin{rem}
Along the lines of the remark of David Goss in \ref{gossrem}, there is the following direct relation between the lifted Goss zeta function and the Weil zeta function:
$$ Z_K(u)=\zeta^{[0]}_K((\chi^{-1}(u),0)), $$
where $u \in W$. 
\end{rem}

\begin{rem} The formalism of Artin of factorisation of zeta functions of a Galois extension according to characters of the Galois group applies to the lifted Goss zeta function (the situation is unclear for the (unlifted) Goss zeta function). Since we have chosen $\chi$ to be the multiplicative Teichm\"uller lift, $\chi \circ N$ is multiplicative, hence $\zeta_K^{[0]}$ admits an Euler product. For $K/F$ Galois with Galois group $G$, let $(V,\rho_G)$ denote the regular representation of $G$ over an algebraic closure of the fraction field of $W$, then we have
$$ \zeta_K^{[0]}(s) = \prod_{\B} \det(1-\rho(\mbox{Frob}_{\B}) \chi(N(\B)^{-s}) \, | \, V^{I_{\B}}), $$
product over prime ideals $\B$ of $\cO_K$. 
Let $L/K$ denote a Galois extension with Galois group $G_{L/K}$, then 
$$ \zeta_L^{[0]}(s) = \prod_{\rho} L_K^{[0]}(\rho,s)^{\chi_\rho(1)}, $$
for $\rho$ running through the irreducible representations of $G_{L/K}$, and where the \emph{lifted Goss $L$-series} $L_K^{[0]}(\rho,s)$ is defined as 
$$ L_K^{[0]}(\rho,s)=\prod_{\B}  \det(1-\rho(\mbox{Frob}_{\B}) \chi(N(\B)^{-s}) \, | \, V^{I_{\B}}). $$
 An application of this formalism is a conceptual proof of Prop.\ \ref{funny}, as sketched in \cite{Gossbook}, p.\ 296. 
\end{rem}

Finally, we prove

\begin{prop} \label{drr}\mbox{}
\begin{enumerate}
\item[\textup{(i)}] Let $F=\F_q(T)$ for any prime power $q$. There exist two geometric extensions $K$ and $L$ over $F$ such that $\zeta_K^{[0]}(s)=\zeta_L^{[0]}(s)$ (so $K$ and $L$ are arithmetically equivalent), but $K$ and $L$ are not isomorphic as field extensions of $F$.
\item[\textup{(ii)}] Let $F=\F_q(T)$ with $q$ a square if $p \equiv 2 \mbox{ mod } 3$. There exist two geometric extensions $K$ and $L$ of degree $7$ over $F$ such that $\zeta_K^{[0]}(s)=\zeta_L^{[0]}(s)$ (so $K$ and $L$ are arithmetically equivalent), but $K$ and $L$ are not isomorphic as field extensions of $F$. A counterexample in smaller degree does not exist. 
\end{enumerate}
\end{prop}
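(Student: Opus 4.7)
The plan is to reduce the statement to group theory via Theorem~\ref{brr}, which identifies equality of lifted Goss zeta functions with Ga{\ss}mann equivalence. It therefore suffices, for each assertion, to exhibit a finite Galois extension $N/F$ with group $G$ and two subgroups $H_K, H_L \leq G$ that are non-conjugate but Ga{\ss}mann equivalent; the fixed fields $K=N^{H_K}$ and $L=N^{H_L}$ then have the same lifted Goss zeta function (they are Ga{\ss}mann equivalent) but are not $F$-isomorphic, since non-conjugate subgroups of $G$ produce non-isomorphic intermediate fields.

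For part (i), one invokes the classical existence of Ga{\ss}mann triples $(G, H_1, H_2)$ with $H_1$ and $H_2$ non-conjugate in $G$; the archetypal small example is $G = \PSL_2(\F_7) \cong \mathrm{GL}_3(\F_2)$, discussed further below, but many others are available in the number-theoretic literature. To mount such a triple over $\F_q(T)$ \emph{for every} $q$, one appeals to the fact that every finite group arises as the Galois group of a regular (hence geometric) extension of $\F_q(T)$; this is the regular inverse Galois theorem of Harbater (and Pop) in positive characteristic. Taking $K$ and $L$ as the fixed fields of $H_1, H_2$ in such an $N$ settles (i).

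For part (ii), one uses the minimal Ga{\ss}mann triple, in which $G = \PSL_2(\F_7) \cong \mathrm{GL}_3(\F_2)$ acts on the seven points, respectively the seven lines, of the Fano plane $\mathbf{P}^2(\F_2)$. The point- and line-stabilizers $H_1, H_2 \cong S_4$ have index~$7$, are interchanged by the outer automorphism of $G$, and afford identical permutation characters $1 + \mathrm{St}$ (with $\mathrm{St}$ the Steinberg representation), hence are Ga{\ss}mann equivalent but not conjugate. The geometric realization of $G$ as the Galois group of a degree-$168$ regular cover of $\mathbf{P}^1_{\F_q}$ is furnished by the Klein quartic $x^3y+y^3z+z^3x=0$, a genus-$3$ curve whose full automorphism group is $\PSL_2(\F_7)$; this automorphism group is defined over $\F_q$ exactly under the arithmetic condition stated in the proposition, the primitive cube root of unity (available in $\F_q$ iff $q$ is a square when $p\equiv 2 \pmod{3}$) being required to express the order-three symmetry of the Klein equation over $\F_q$. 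Taking the quotients of the Klein quartic by $H_1$ and $H_2$ yields two non-isomorphic degree-$7$ covers of $\mathbf{P}^1_{\F_q}$, whose function fields are the required $K$ and $L$.

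The non-existence of counterexamples in degree strictly less than $7$ is a finite group-theoretic verification: one enumerates the (well-tabulated, very short) list of transitive permutation groups of degree at most~$6$ and checks case-by-case that any two subgroups affording the same permutation character of $G$ are already conjugate in $G$. This is the classical observation underlying Ga{\ss}mann's construction and is explicitly verified in \cite{Perlis1}; being purely group-theoretic, it transfers verbatim from the number-field setting. The principal obstacle in the entire argument is the explicit descent in part (ii): whereas the abstract Harbater realization trivializes (i) for any $q$, the sharp index-$7$ requirement in (ii) forces one to work with a concrete model of $\PSL_2(\F_7)$ (the Klein quartic being the natural choice), which in turn introduces the subtle rationality condition on $q$ through the arithmetic of third roots of unity.
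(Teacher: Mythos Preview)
Your overall strategy is sound and largely matches the paper's, but there is one genuine gap and one point of divergence worth flagging.

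For part~(i), you and the paper take different routes to the same destination. The paper uses Komatsu's explicit Ga{\ss}mann triple inside $S_{\ell^3}$ (the abelian group $(\Z/\ell)^3$ versus the Heisenberg group over $\F_\ell$, embedded via Cayley), and then realizes $S_{\ell^3}$ over $\F_q(T)$ by specializing the general polynomial via Hilbert irreducibility, using that $\F_q(x_1)$ is Hilbertian. Your appeal to the Harbater--Pop regular inverse Galois theorem is a legitimate shortcut: it realizes \emph{any} finite group regularly over $\F_q(T)$, so once a single Ga{\ss}mann triple is named the construction is immediate. Your version is cleaner and uses a stronger black box; the paper's version is more explicit and avoids rigid/formal patching.

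For part~(ii), however, you have not handled the case $p=7$. The Klein quartic is (the good reduction of) the modular curve $X(7)$, and the paper is careful to treat $p=7$ separately: there it invokes Abhyankar's result that the polynomial $X^8-TX+1$ has Galois group $\PSL(2,7)$ over $\bar\F_7(T)$, and then checks (by direct computation) that the automorphisms are already defined over $\F_7$. Your argument, which relies solely on the Klein model, does not cover this prime; since $7\equiv 1\pmod 3$, the hypothesis of~(ii) places no restriction on $q$ when $p=7$, so you cannot sidestep it. You should either justify that the Klein quartic still furnishes a geometric $\PSL(2,7)$-cover of $\mathbf{P}^1$ in characteristic~$7$ (which is not obvious and is not what the paper does), or supply an alternative realization as the paper does. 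The remaining ingredients of~(ii)---the two index-$7$ subgroups of $\PSL(2,7)$ from Perlis, the $\zeta_3$ rationality condition on $q$, and the citation of Perlis's Theorem~3 for the degree-$\le 6$ lower bound---agree with the paper.
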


\begin{proof}

For (i), we pick up the construction of Komatsu (\cite{Komatsu}): Let $\ell>2$ denote a prime number, let $H_1 = (\Z/\ell)^3$ and $$H_2= \left\{ \left( \begin{array}{ccc} 1 & \ast & \ast \\ 0 & 1 & \ast \\ 0 & 0 & 1 \end{array} \right) \in \mathrm{GL}(3,\F_\ell) \right\}.$$ Then $H_1$ and $H_2$ are Ga{\ss}mann equivalent  as subgroups (via the Cayley map) of the symmetric group $S_{\ell^3}$. Note that $H_2$ is not commutative for $\ell>2$, so in particular, not isomorphic to $H_1$. 

Now $S_{\ell^3}$ is realized over the field $\F_q(x_1,\dots,x_{\ell^3})$ as Galois group of the general polynomial, and since $\F_q(x_1)$ is Hilbertian, Hilbert's irreducibility (cf.\ \cite{Jarden} Prop.\ 16.1.5) implies that there exists a specialisation of $x_2,\dots,x_{\ell^3}$ to $\F_q(x_1)$ that gives a (geometrical) $S_{\ell^3}$ Galois extension of $\F_q(T)$ (setting $T=x_1$).

The extensions $K$ and $L$ corresponding to this construction are huge, of degree $(\ell^3-1)!$ over $F$ (a $27$-digit number for $\ell=3$). To construct an example of much smaller degree, we rely on modular curves, but at the cost of assuming $q$ to be a square: 
  
We prove (ii) as follows. The argument in \cite{Perlis1}, Thm.\ 3 is purely group theoretical, and shows that there are no Ga{\ss}mann equivalent subgroups in $S_n$ for $n \leq 6$, hence also no non-trivially Ga{\ss}mann equivalent function fields over $F$ of degree $\leq 6$. 

We know from \cite{Perlis1}, p.\ 358 that  $G=\PSL(2,7)$ has two Ga{\ss}mann equivalent subgroups of index $7$, so it remains to realize $G$ as Galois group over $F$. 

For this, first assume $p \neq 7$. Then let $N$ denote the function field of the Klein quartic over $\F_p$, a.k.a.\ the modular curve $X(7)_{/\Z}$ reduced modulo $p$. We know that $\Gal(N\bar{\F}_q/F\bar{\F}_q)$ contains $\PSL(2,7)$ (actually, is equal to it unless $p=3$, cf.\ \cite{Guralnick},  \cite{Elkies}, when it is a group of order $6048$ instead of $168$). However, the action of $\PSL(2,7)$ is only defined over $\F_q(\zeta_3)$ with $\zeta_3$ a third root of unity. Now if $p \equiv 1 \mbox{ mod } 3$, then $-3$ is a square mod $p$, so $\zeta_3 \in \F_q$, and in the other case, we assume $F \supseteq \F_{p^2}$, so we are fine, too. We don't know whether $K/F$ and $L/F$ can be descended further to $\F_p$ in general.  

Now if $p=7$, we use instead the result of Abhyankar (\cite{Abhyankar}) that the polynomial $X^8-TX+1$ has Galois group over $\bar{\F}_q F$ equal to $\PSL(2,7)$. One may compute directly, by hand or by \textsf{Magma} \cite{MAGMA} that in fact the automorphism group is defined already over $\F_7$. 
\end{proof}

\bibliographystyle{amsplain}
%\bibliography{arithequiv}
\providecommand{\bysame}{\leavevmode\hbox to3em{\hrulefill}\thinspace}
\providecommand{\MR}{\relax\ifhmode\unskip\space\fi MR }
% \MRhref is called by the amsart/book/proc definition of \MR.
\providecommand{\MRhref}[2]{%
  \href{http://www.ams.org/mathscinet-getitem?mr=#1}{#2}
}
\providecommand{\href}[2]{#2}

\end{document}